\newtheorem{theorem}{Theorem}[section]
\newtheorem{proposition}[theorem]{Proposition}
\newtheorem{lemma}[theorem]{Lemma}
\numberwithin{equation}{section} 
\numberwithin{figure}{section}  
\newcommand \la \langle
\newcommand \ra \rangle
\newcommand \Ecal {\mathcal{E}}
\newcommand \underdel {\underline \partial}
\newcommand \trianglerightNEW \triangleright
\newcommand \phih {\widehat \phi}
\newcommand \wh {\widehat w}
\newcommand \uh {\widehat u}
\newcommand \auth {\textsc}
\newcommand \Kcal {\mathcal K}
\newcommand \bei {\begin{itemize}}
\newcommand \eei {\end{itemize}}
\newcommand \be {\begin{equation}}
\newcommand \bel {\be\label}
\newcommand \ee {\end{equation}}
\newcommand \del \partial
\newcommand \RR {{\mathbb R}}
\newcommand \Hcal {\mathcal H}
\newcommand \eps \epsilon
\newcommand \tildeu {\widetilde u}
\newcommand \tildev {\widetilde v}
\newcommand \tildephi {\widetilde \phi}
\newcommand \tildepsi {\widetilde \psi}
\let\oldmarginpar\marginpar
\renewcommand\marginpar[1]{\-\oldmarginpar[\raggedleft\footnotesize #1]%
{\raggedright\footnotesize #1}}
\begin{document}

\title{\bf The zero mass problem for Klein-Gordon equations} 

\author{Shijie Dong\footnote{
\normalsize Laboratoire Jacques-Louis Lions, Centre National de la Recherche Scientifique, Sorbonne Universit\'e, 
4 Place Jussieu, 75252 Paris, France. 
\newline
Email : {\sl dongs@ljll.math.upmc.fr}
\newline AMS classification: 35L05, 35L52, 35L71.
{\sl Keywords and Phrases.} Klein-Gordon equation with vanishing mass; hyperboloidal foliation method; 
 uniform decay estimates. 
}}

\date{May, 2019}

\maketitle

\begin{abstract}
We are interested in the global solutions to a class of Klein-Gordon equations, and particularly in the unified time decay results with respect to the possibly vanishing mass parameter.
We give for the first time a rigorous proof, which relies on both the flat foliation and the hyperboloidal foliation of the Minkowski spacetime. In order to take advantages of both foliations, an iteration procedure is used.
\end{abstract}

\tableofcontents
 


\section{Introduction}

\subsection*{Model problem and main result}

We are interested in the following system of wave-Klein-Gordon equations
\bel{eq:Va-model}
\aligned
- \Box u + m^2 u
&= M_1 v^3 + P^\alpha \del_\alpha ( v^2),
\\
- \Box v + v
&= N_1 (\del_t u)^2 + N_2 u^3 + N_3 u v,
\endaligned
\ee
in which the range of the mass parameter in the $u$ equation is
$$
m \in [0, 1],
$$
and $M_1, N_1, N_2, N_3, P^\alpha$ are fixed constants which are independent of the mass parameter $m$.
The wave operator is defined by $\Box := \eta^{\alpha \beta} \del_\alpha \del_\beta$, with $\eta = \text{diag}(-1, 1, 1, 1)$ the metric of the Minkowski spacetime. Throughout we use Greek letters to denote spacetime indices and Latin letters to denote space indices, and the Einstein summation convention is adopted unless specified.

The initial data are prescribed at the hypersurface $t=t_0$
\bel{eq:Va-ID}
\big( u, v\big)(t_0, \cdot)
= (u_0, v_0),
\qquad
\big( \del_t u, \del_t v\big)(t_0, \cdot)
= (u_1, v_1).
\ee
In the following we will take $t_0 = 2$, and assume the support of the initial data is in $B(0, 1)$, which is the ball centred at the origin with radius 1.

We are interested in the existence of global solutions to the system \eqref{eq:Va-model} which is uniform in terms of the parameter $m \in [0, 1]$, and particularly, in the behavior of $u$ in the limiting process of $m \to 0$, which is proposed by LeFloch \cite{PLF}. Intuitively, in a short time range the effect of the mass term in the $u$ equation is negligible when the mass parameter $m$ is very tiny, and the solution $u$ is expected to behave more like a wave component, i.e. solution to the $u$ equation with $m = 0$. The goal of this paper is to give a precise statement and provide a rigorous proof on this problem.

The main result is now stated.

\begin{theorem}\label{thm:Va-main}
Consider the  system \eqref{eq:Va-model} with the mass parameter $m \in [0, 1]$,
and let $N \geq 14$ be an integer.  
Then there exists $\eps_0 > 0$, which is notably independent of $m$, such that for all $\eps \in (0, \eps_0)$ and 
all compactly supported initial data $(u_0, u_1, v_0, v_1)$
satisfying the smallness condition 
\bel{eq:3Dmasssmall}
\| u_0, v_0 \|_{H^{N+1}(\RR^3)} + \| u_1, v_1 \|_{H^N(\RR^3)} 
\leq \eps,
\ee
the initial value problem \eqref{eq:Va-model}--\eqref{eq:Va-ID} admits a global-in-time solution $(u, v)$.
Moreover it holds
\bel{eq:Va-unified}
| u(t, x) | 
\lesssim\footnote{We always use $B_1\lesssim B_2$ to denote $B_1 \leq C B_2$ with $C$ a generic constant independent of $m, \eps$ and $C_1$ to be introduced.} 
{1 \over t + m t^{3/2}},
\quad
| v(t, x) |
\lesssim t^{-3/2 + \delta},
\ee 
for arbitrarily small $0<\delta \ll 1/10$.
\end{theorem}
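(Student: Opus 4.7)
The plan is to run a bootstrap argument on high-order energy norms simultaneously on two foliations of Minkowski spacetime---the flat slices $\{t=\text{const}\}$ and the hyperboloidal slices $\{t^2-|x|^2=s^2\}$---and to combine the resulting sup-norm estimates so as to obtain a decay bound for $u$ that is uniform in $m\in[0,1]$. I would commute the system with the Poincar\'e vector fields $\{\del_\alpha,\, L_a=x^a\del_t+t\del_a,\,\Omega_{ab}\}$ up to order $N\geq 14$, since these preserve both the wave and Klein-Gordon structures without introducing $m$-dependent constants; the scaling $S=t\del_t+x^a\del_a$ should be avoided as it does not commute cleanly with the mass term. On the flat side one sets up the standard vector-field energy $E(t)=\|\del u\|_{L^2}^2+m^2\|u\|_{L^2}^2+\|\del v\|_{L^2}^2+\|v\|_{L^2}^2$, while on the hyperboloidal side one uses the hyperboloidal energy with its characteristic coercivity including the mass term.

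The key decay $|u|\lesssim(t+mt^{3/2})^{-1}$ is the crux of the theorem, and I expect two independent mechanisms to be in play. First, the flat-foliation Klainerman--Sobolev inequality applied to $-\Box u=-m^2 u+(\text{source})$, with the mass treated as part of the source, yields the wave-type bound $|u|\lesssim 1/t$ uniformly in $m$. Second, the hyperboloidal Klein-Gordon sup-norm estimate, which uses the mass as coercivity, yields $|u|\lesssim 1/(mt^{3/2})$, the factor $1/m$ reflecting the fact that the hyperboloidal energy controls $m\|u\|_{L^2}$ rather than $\|u\|_{L^2}$. Taking the minimum of the two and applying $\min(a^{-1},b^{-1})\sim(a+b)^{-1}$ reproduces the stated bound. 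The companion estimate $|v|\lesssim t^{-3/2+\delta}$ comes from the usual hyperboloidal Klein-Gordon sup-norm estimate applied to the $v$ equation, with the small $\delta$ loss absorbing the borderline resonant interaction $N_3 uv$.

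Closing the bootstrap requires careful treatment of the source terms. The cubic pieces $v^3$ and $u^3$ are harmless under the expected decay. The semilinear divergence $P^\alpha\del_\alpha(v^2)$ in the $u$ equation must be handled by integration by parts, or equivalently by passing to a modified energy that absorbs the derivative, since a direct bound on $\del v$ is insufficient. The truly delicate source is the quadratic wave-wave term $N_1(\del_t u)^2$ in the $v$ equation: its $L^2$ contribution integrated in hyperboloidal time is borderline divergent under the pure wave bound $|\del_t u|\lesssim 1/s$, and it is precisely the mass-enhanced decay that restores convergence when $m$ is not too small. This is where the iteration procedure announced in the abstract enters: flat-slice wave estimates feed the hyperboloidal $v$-estimate, whose improved $v$-decay then feeds the $u$-estimate on both foliations, and one cycles until the bootstrap closes. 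The main obstacle throughout is the uniformity in $m$: one must verify at every step that no constant implicitly blows up as $m\to 0$, which is exactly what rules out naive Klein-Gordon normal forms or scaling-based arguments and forces the hybrid two-foliation iteration.
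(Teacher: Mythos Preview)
Your two-foliation architecture and choice of commuting vector fields match the paper's. However, there is a genuine gap at the heart of the argument, and one misdiagnosis.

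The gap: you never explain how to obtain $\|\Gamma^I u\|_{L^2}\lesssim 1$ uniformly in $m$, yet this is exactly what the Klainerman--Sobolev step $|u|\lesssim t^{-1}\sum\|\Gamma^I u\|_{L^2}$ requires. ``Treating the mass as part of the source'' does not deliver it: the wave energy controls only $\|\del u\|$, the Klein--Gordon energy only $m\|u\|$, and feeding $m^{2}u$ back as a wave source produces growth rather than a uniform bound. The paper's device (Proposition~\ref{prop:Va-key}) is to solve in Fourier space, use the elementary uniform inequality $|\sin(t\xi_m)/\xi_m|\leq 1/|\xi|$ (valid because $\xi_m\geq|\xi|$ for every $m\in[0,1]$), and then apply Hardy's inequality in the frequency variable; this yields $\|u\|_{L^2}\lesssim\|u_0\|+\|xu_1\|+\int\|xf\|\,dt'$ with constants independent of $m$. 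The divergence source $P^\alpha\del_\alpha(v^2)$ is then handled not by a modified energy but by Katayama's decomposition $u=\del_\alpha\Phi^\alpha+\Phi^{5}$ with $-\Box\Phi^\alpha+m^{2}\Phi^\alpha=P^\alpha v^{2}$, so that the $\Phi^\alpha$ contribution is estimated through its gradient and Proposition~\ref{prop:Va-key} is applied only to the cubic piece.

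The misdiagnosis: you assert that the quadratic term $N_1(\del_t u)^2$ in the $v$-equation needs ``mass-enhanced decay'' to become integrable. It does not; the paper closes it uniformly in $m$, including $m=0$. On hyperboloids the energy controls $\|(s/t)\del_t u\|_{L^2_f(\Hcal_s)}$ and the Sobolev inequality on $\Hcal_s$ gives $|\del u|\lesssim t^{-1/2}s^{-1}$; writing $(\del_t u)^2=\big((t/s)\del_t u\big)\cdot\big((s/t)\del_t u\big)$ produces an $s^{-1}$ integrand whose logarithmic divergence is absorbed into the permitted $s^{\delta}$ growth of $E_1(s,\Gamma^J v)^{1/2}$. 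No $m$ appears anywhere in this step. As a minor structural remark, the paper runs a contraction-mapping iteration on a fixed solution space (Propositions~\ref{prop:Va-contraction0} and~\ref{prop:Va-contraction1}) rather than a bootstrap, though the underlying estimates are essentially the same.
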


We note that Theorem \ref{thm:Va-main} is much easier to prove at the end points of $m = 0$ and $m = 1$, in which cases the equation of $u$ is a wave equation and a Klein-Gordon equation respectively. But more is involved in obtaining a uniform result in terms of $m \in [0, 1]$ (especially when $m$ is very small but nonzero): 

\bei
\item
The $L^2$-type estimates and sup-norm estimates obtained by the estimates on the mass term $m^2 u$ cannot be used due to the bad factor $m^{-1}$. To be more precise, even if $m \|u\|_{L^2}$ has good bound, the bound of $\|u\|_{L^2}$ might below up when $m \to 0$.
\item
The scaling vector field and the conformal vector field do not commute with the Klein-Gordon operator $- \Box + m^2$.
\item
The tricks in Proposition 3.1 and Proposition 3.3 of \cite{PLF-YM-cmp} in obtaining pointwise estimates for wave or Klein-Gordon components cannot be applied on $u$ component due to its possibly vanishing mass $m$.
\item 
It is hard to get either a good uniform $L^2$-type estimate or a sharp uniform sup-norm estimate on $u$ component in terms of $m$.
\eei

We will only prove Theorem \ref{thm:Va-main} in dimension $3$, but the same proof applies to higher dimensions $\geq 4$ automatically. Thanks to the fast decay of solutions to wave or Klein-Gordon equations in dimensions $\geq 4$, it is harmless to add many quadratic nonlinearities to \eqref{eq:Va-model}.
We also note that the zero mass problem arises in the study of the electroweak standard model, see \cite{DLW}, where the mass of the left-handed neutrino spinor is extremely small but nonzero according to the physical experiments. Moreover, the model problem \eqref{eq:Va-model} can cover the Klein-Gordon-Zakharov model (see for instance \cite{OTT,Tsutaya}), and one refers to \cite{DW} for another generalisation of the study on that model.


\subsection*{Previous work and new ideas}

Let us briefly review some existing work before we illustrate our techniques for treating the system \eqref{eq:Va-model}.
It was shown independently by Klainerman \cite{Klainerman86} and Christodoulou \cite{Christodoulou} that wave equations with null form nonlinearities admit global-in-time solutions. Various important results on the wave equations came out by employing the vector field method by Klainerman, or the conformal method by Christodoulou.
On the other hand, Klainerman \cite{Klainerman85} and Shatah \cite{Shatah} were first able to treat Klein-Gordon equations with quadratic nonlinearities in $\RR^{1+3}$. By relying on Klainerman's vector field method and Shatah's normal form method, many results on the (wave and) Klein-Gordon equations were proved.
Later on, LeFloch-Ma \cite{PLF-YM-book} introduced the hyperboloidal foliation method, which allows one to treat coupled wave and Klein-Gordon equations in one framework.

However, in terms of the unified decay estimates in \eqref{eq:Va-unified}, the only existing such result, to the best knowledge of the author, was proved in \cite{DLW} for the Dirac component.  
Relying on the special structure of the Dirac equation, the authors in \cite{DLW} were able to define a positive definite energy functional adapted to the hyperboloidal foliation. In particular the energy functional is independent of the mass parameter of the Dirac equation, which allows one to obtain the unified decay result.

In this paper, we rely on the hyperboloidal foliation method to show the decay results $t^{-3/2}$ (possibly with a factor of $m^{-1}$) of the Klein-Gordon components, while we utilise the flat foliation and the vector field method to obtain the decay results $t^{-1}$ of $u$ component which are uniform in terms of $m \in [0, 1]$. The key to obtaing the uniform decay results $t^{-1}$ of $u$ component is to prove a uniform $L^2$ bound, which is stated in Proposition \ref{prop:Va-key}, where we analyse $u$ equation in the frequency space; see also Proposition \ref{prop:3DmassH} for the homogeneous case. In order to take advantages of both foliations, we use an iteration method, which was used in \cite{Bachelot,Tsutaya} and many others. The compactness assumption in Theorem \ref{thm:Va-main} is due to the use of the hyperboloidal foliation, which for instance was removed in \cite{PLF-YM-arXiv1,PLF-YM-arXiv2}.


\subsection*{Some special types of quadratic nonlinearities}

We note that there is one quadratic term in the $u$ equation, which is of divergence form. It is interesting to investigate other possibilities of quadratic nonlinearities in the $u$ equation which also lead to global-in-time solution as well as the unified decay results \eqref{eq:Va-unified}. We find it safe to include the following two special types of quadratic terms, while we leave the effects of more general quadratic terms open.

\paragraph{Type 1.}

Denote the strong null forms (see \cite{Georgiev}) by
$$
Q_{\alpha \beta}(u, v)
=
\del_\alpha u \del_\beta v - \del_\beta u \del_\alpha v,
$$
we know that it holds (see for instance \cite{Sogge})
$$
\aligned
Q_{a b}(u, v)
&=
{1\over t} \big( \del_t u \Omega_{ab} v + L_a u \del_b v - L_b u \del_a v \big),
\\
Q_{0 a}
&=
{1\over t} \big( \del_t u L_a v - L_a u \del_t v \big).
\endaligned
$$
It is not hard to see that the good factor $t^{-1}$ makes the null forms $Q_{\alpha \beta}$ behave like a cubic term. 
Hence we can add the strong null forms $Q_{\alpha \beta}$ in the $u$ equation.

\paragraph{Type 2.}

Recall that Delort et. \cite{Delort} introduced one notion of null forms for the Klein-Gordon equations when they studied the global solutions to the Klein-Gordon system with different masses in dimension two. Later on, Katayama et. \cite{KOS} gave a characterization of the null condition proposed in \cite{Delort} and \cite{KaSu}. 
Motivated by their work, we find it also harmless to consider
\be 
\aligned
- \Box u + m^2 u 
&= (2 - m^2 ) v^2 + \del_\alpha v \del^\alpha v,
\\
- \Box v + v
&= Q_v.
\endaligned
\ee
The reason is that the variable
$$
\underline{u}
:= u + v^2
$$
satisfies the equation with cubic nonlinearities, i.e.
\be 
- \Box \underline{u} + m^2 \underline{u}
= 2 v Q_v.
\ee
In conclusion, we can also treat the quadratic term $C (2 - m^2 ) v^2 + C \del_\alpha v \del^\alpha v$ in the $u$ equation when $Q_v$ is sufficiently good, with $C$ a constant independent of $m$.


\subsection*{Organisation of the paper}
In Section \ref{sec:pre}, we introduce some notations and preliminaries. Next in Section \ref{sec:linear}, we study the unified decay estimates for both homogeneous and inhomogeneous linear Klein-Gordon equations with varying mass. Moreover we introduce the solution space and the solution map with several properties in Section \ref{sec:iteration}. Finally in Section \ref{sec:proof} we prove Theorem \ref{thm:Va-main} relying on an iteration process which takes advantages of both the flat and the hyperboloidal foliations. 


\section{Notations and preliminaries}\label{sec:pre}

\subsection{Introduction of the hyperboloidal foliation method}
 
We recall some notations of the hyperboloidal foliation of the Minkowski spacetime used in \cite{PLF-YM-book,PLF-YM-cmp},
which was also introduced and used by Klainerman \cite{Klainerman85} and in the book \cite{Hormander}.
We need to introduce and study the energy functional for wave or Klein-Gordon components on hyperboloids,
and it is necessary to first recall some notations from \cite{PLF-YM-book} concerning the hyperboloidal foliation method. We consider here the $(3+1)$-dimensional Minkowski spacetime with signature $(-, +, +, +)$, and in Cartesian coordinates we adopt the notation of one point $(t, x) = (x^0, x^1, x^2, x^3)$, with its spatial radius $r := | x | = \sqrt{(x^1)^2 + (x^2)^2 + (x^3)^2}$. Partial derivatives are denoted by $\del_\alpha := \del_{x^\alpha}$ (for $\alpha=0, 1, 2, 3$), and 
\be
L_a := x^a \del_t + t \del_a, \qquad a= 1, 2, 3
\ee
represent the Lorentz boosts. Throughout, the functions considered are defined in the interior of the future light cone 
$$
\Kcal
:= \{(t, x): r< t-1 \},
$$
with vertex $(1, 0, 0, 0)$. We denote the hyperboloidal hypersurfaces by 
$$
\Hcal_s
:= \{(t, x): t^2 - r^2 = s^2 \},
\qquad s\geq 2.
$$
We emphasize here that within the cone $\Kcal$ it holds for all points on $\Hcal_s$
\be 
s < t < s^2,
\qquad
r < t.
\ee
Besides, the subsets of $\Kcal$ limited by two hyperboloids $\Hcal_{s_0}$ and $\Hcal_{s_1}$ with $s_0 \leq s_1$ are denoted by
$$
\Kcal_{[s_0, s_1]} 
:= \big\{(t, x): s_0^2 \leq t^2- r^2 \leq s_1^2; \, r<t-1 \big\}.
$$

We now introduce the semi-hyperboloidal frame
\bel{eq:semi-hyper}
\underdel_0
:= \del_t, \qquad \underdel_a:= {L_a \over t} = {x^a\over t}\del_t+ \del_a.
\ee
We notice that the vectors $\underdel_a$ generate the tangent space of the hyperboloids. Besides, the vector field 
$$
\underdel_\perp:= \del_t+ (x^a / t)\del_a
$$ 
is orthogonal to the hyperboloids and is proportional to the scaling vector field $S := t \del_t + x^a \del_a$.

The dual of the semi-hyperboloidal frame is given by 
$$
\underline{\theta}^0
:= dt- (x^a / t)dx^a,
\qquad
\underline{\theta}^a
:= dx^a.
$$ 
The (dual) semi-hyperboloidal frame and the (dual) natural Cartesian frame are connected by the following relations
\bel{semi-hyper-Cts}
\aligned
\underdel_\alpha
&= \Phi_\alpha^{\alpha'}\del_{\alpha'}, 
\qquad 
\del_\alpha
= \Psi_\alpha^{\alpha'}\underdel_{\alpha'}, 
\\
\underline{\theta}^\alpha
&= \Psi^\alpha_{\alpha'}dx^{\alpha'}, 
\qquad 
dx^\alpha
= \Phi^\alpha_{\alpha'}\underline{\theta}^{\alpha'},
\endaligned
\ee
where the transition matrix ($\Phi^\beta_\alpha$) and its inverse ($\Psi^\beta_\alpha$) are given by
\be
(\Phi_\alpha^{ \beta})=
\begin{pmatrix}
1 & 0 &   0 &  0   \\
{x^1 / t} & 1  & 0   &  0  \\
{x^2 / t} &  0  &  1  &  0   \\
{x^3 / t} &  0 & 0   & 1
\end{pmatrix}
\ee
and 
\be
(\Psi_\alpha^{ \beta})=
\begin{pmatrix}
1 & 0 &   0 &  0   \\
-{x^1 / t} & 1  & 0   &  0  \\
-{x^2 / t} &  0  &  1  &  0   \\
-{x^3 / t} &  0 & 0   & 1
\end{pmatrix}.
\ee


\subsection{Energy estimates on hyperboloids}

Following \cite{PLF-YM-cmp} and considering in the Minkowski background, we introduce the energy functional $E_m$ for a nice function $\phi = \phi(t, x)$ defined on the hyperboloid $\Hcal_s$
\bel{eq:2energy} 
\aligned
E_m(s, \phi)
&:=
\int_{\Hcal_s} \Big( \big(\del_t \phi \big)^2+ \sum_a \big(\del_a \phi \big)^2+ 2 (x^a/t) \del_t \phi \del_a \phi + m^2 \phi ^2 \Big) \, dx,
\endaligned
\ee 
which has two other equivalent (and more useful) expressions
\bel{eq:2energy2} 
\aligned
E_m(s, \phi)
&=
\int_{\Hcal_s} \Big( \big( (s/t)\del_t \phi \big)^2+ \sum_a \big(\underdel_a \phi \big)^2+ m^2 \phi^2 \Big) \, dx
\\
&= \int_{\Hcal_s} \Big( \big( \underdel_\perp \phi \big)^2+ \sum_a \big( (s/t)\del_a \phi \big)^2+ \sum_{a<b} \big( t^{-1}\Omega_{ab} \phi \big)^2+ m^2 \phi^2 \Big) \, dx,
\endaligned
\ee
in which 
$$
\Omega_{ab}
:= x^a\del_b- x^b\del_a
$$
are the rotational vector fields, and $\underdel_{\perp}= \del_t+ (x^a / t) \del_a$ is the orthogonal vector field. It is helpful to point it out that each term in the expressions \eqref{eq:2energy2} are non-negative, which is vital in estimating the energies of wave or Klein-Gordon equations.
We use the notation 
$$
E(s, \phi)
:= E_0(s, \phi)
$$ 
for brevity.
In the above, the integral in $L_f^1(\Hcal_s)$ is defined from the standard (flat) metric in $\RR^3$, i.e.
\bel{flat-int}
\|\phi \|_{L^1_f(\Hcal_s)}
:=\int_{\Hcal_s}|\phi | \, dx
=\int_{\RR^3} \big|\phi(\sqrt{s^2+r^2}, x) \big| \, dx.
\ee
By contrast, we will also frequently use the norms of functions on the flat slices, which are denoted by
\be 
\|\phi \|
:=
\| \phi \|_{L^2(\RR^3)}
:=
\Big( \int_{\RR^3} |\phi (t, x)|^2 \, dx \Big)^{1/2},
\ee
and the energy functional on the flat slices
\be 
\Ecal_m (t, \phi)
:=
\sum_\alpha \| \del_\alpha \phi \|^2(t) 
+
m^2 \| \phi \|^2(t).
\ee

Next, we recall the energy estimates for wave-Klein-Gordon equations on the hyperboloids.

\begin{proposition}[Energy estimates for wave-Klein-Gordon equations]
For all $m \geq 0$ and $s \geq 2$, it holds that
\bel{eq:w-EE} 
E_m (s, u)^{1/2}
\leq 
E_m (2, u)^{1/2}
+ \int_2^s \| -\Box u + m^2 u \|_{L^2_f(\Hcal_{s'})} \, ds'
\ee
for all sufficiently regular function $u$, which is defined and supported in the region $\Kcal_{[2, s]}$.
\end{proposition}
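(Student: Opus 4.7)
The plan is to establish the estimate via the standard multiplier method: multiply the equation $-\Box u + m^2 u = f$ by $\partial_t u$ to produce a divergence identity, integrate over the spacetime slab between $\mathcal{H}_2$ and $\mathcal{H}_s$, and identify the boundary fluxes with the energy functionals.

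First, I would compute directly that
\[
(-\Box u + m^2 u)\, \partial_t u \;=\; \partial_t\!\left(\tfrac12\big[(\partial_t u)^2 + \textstyle\sum_a (\partial_a u)^2 + m^2 u^2\big]\right) \;-\; \partial_a\big(\partial_t u \cdot \partial_a u\big),
\]
so that $\partial_\alpha V^\alpha = f\,\partial_t u$ with $V^0 = \tfrac12[(\partial_t u)^2+\sum_a(\partial_a u)^2+m^2 u^2]$ and $V^a = -\partial_t u\,\partial_a u$. Then I would apply the Euclidean divergence theorem on $\mathcal{K}_{[2,s]}\subset\mathbb{R}^{1+3}$. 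The boundary consists of $\mathcal{H}_s$, $\mathcal{H}_2$, and a portion of the cone $\{r=t-1\}$, but the support assumption on $u$ kills the cone piece. Parametrising $\mathcal{H}_{s'}$ by $x\in\mathbb{R}^3$ with $t=\sqrt{(s')^2+|x|^2}$, a short computation of the unnormalised outward conormal $(1,-x^a/t)\,dx$ shows that the flux through each $\mathcal{H}_{s'}$ reduces exactly to
\[
\int_{\mathcal{H}_{s'}} \left(V^0 - \tfrac{x^a}{t}V^a\right) dx \;=\; \tfrac12 E_m(s',u),
\]
using the first expression of the energy in \eqref{eq:2energy}.

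Next, slicing the spacetime integral along the hyperboloidal foliation via $dt\,dx = (s'/t)\,ds'\,dx$ yields
\[
\tfrac12 E_m(s,u) \;-\; \tfrac12 E_m(2,u) \;=\; \int_2^s \int_{\mathcal{H}_{s'}} f\,\partial_t u \cdot \tfrac{s'}{t}\, dx\, ds'.
\]
Differentiating in $s$ and applying Cauchy--Schwarz on each hyperboloid gives
\[
\tfrac{d}{ds}\,E_m(s,u) \;\leq\; 2\, \|f\|_{L^2_f(\mathcal{H}_s)}\, \big\|(s/t)\partial_t u\big\|_{L^2_f(\mathcal{H}_s)},
\]
and from the second line of \eqref{eq:2energy2} the weighted factor is controlled by $\|(s/t)\partial_t u\|_{L^2_f(\mathcal{H}_s)}\leq E_m(s,u)^{1/2}$. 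This produces $\tfrac{d}{ds} E_m(s,u)^{1/2} \leq \|f\|_{L^2_f(\mathcal{H}_s)}$, and integrating from $2$ to $s$ yields the stated inequality.

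The only real subtlety is the geometric bookkeeping of step two — correctly identifying the conormal flux with the energy density on $\mathcal{H}_s$ and verifying the good sign of the cross term $2(x^a/t)\partial_t u\,\partial_a u$ that appears in \eqref{eq:2energy}. The rest is a routine Gronwall-type argument; no regularity beyond what is implicit in the statement is needed, since the identity is first derived for smooth compactly supported $u$ and then extended by the density of such functions in the natural energy class.
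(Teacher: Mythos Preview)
Your argument is correct and is precisely the standard multiplier-and-divergence computation that the paper defers to \cite{PLF-YM-cmp} rather than reproducing. The identification of the hyperboloidal flux with $\tfrac12 E_m(s',u)$ and the subsequent Gronwall step are carried out exactly as in that reference, so there is nothing to add.
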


For the proof, one refers to \cite{PLF-YM-cmp}.


\subsection{Useful inequalities}

\paragraph{Klainerman-Sobolev inequality}

We first state the following Klainerman-Sobolev inequality, whose proof can be found in \cite{Klainerman2}.

\begin{proposition}
Let $u = u(t, x)$ be a sufficiently smooth function which is compactly supported for each fixed $t \geq 2$.
Then for any $t \geq 2$, $x \in \RR^3$, we have
\bel{eq:Va-K-S}
|u(t, x)|
\lesssim t^{-1} \sup_{0\leq t' \leq 2t, |I| \leq 3} \big\| \Gamma^I u \big\|_{L^2(\RR^3)},
\qquad
\Gamma \in A := \{ L_a, \del_\alpha, \Omega_{ab} = x^a \del_b - x^b \del_a \}.
\ee
\end{proposition}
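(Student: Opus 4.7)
The stated inequality is a convenient form of the classical Klainerman--Sobolev inequality in $\RR^{1+3}$, adapted to the admissible family $A$ of Lorentzian vector fields (which notably does not contain the Minkowski scaling $S = t\del_t + x^a\del_a$). My plan is first to establish the sharper pointwise bound at fixed time,
\[
(1+t)\,|u(t,x)| \lesssim \sum_{|I| \leq 3} \|\Gamma^I u(t,\cdot)\|_{L^2(\RR^3)}, \qquad t \geq 2,\ \Gamma \in A,
\]
after which the stated version follows immediately by bounding the right-hand side by $\sup_{0 \leq t' \leq 2t, |I|\leq 3} \|\Gamma^I u(t',\cdot)\|_{L^2}$.

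To prove this fixed-time bound I would follow Klainerman's original strategy, which combines Sobolev embeddings on the $2$-sphere with a dyadic decomposition in the radial variable and the algebraic identity
\[
t\,\del_a = L_a - x^a \del_t, \qquad a = 1,2,3,
\]
obtained directly from the definition $L_a = x^a\del_t + t\del_a$. Concretely, the key steps are: (i) partition $\RR^3$ into dyadic annuli $A_k := \{|x|\sim 2^k\}$ via a smooth radial partition of unity; (ii) on each $A_k$, apply the $H^2(S^2) \hookrightarrow L^\infty(S^2)$ embedding on spheres centered at the origin, with the rotations $\Omega_{ab}$ supplying the angular regularity; (iii) use a one-dimensional Sobolev embedding in the radial variable to convert the spherical $L^\infty$ norm into a mixed $L^2$ norm on a thickened annulus; (iv) convert the radial derivative $\del_r$ into admissible vector fields via the displayed identity, which produces the desired $(t+r)^{-1}$ weight because $t\del_a$ is directly expressed as $L_a$ modulo a $\del_t$ error whose coefficient $|x^a|$ remains bounded on the support. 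Reassembling the dyadic contributions by Cauchy--Schwarz produces the $(1+t)^{-1}$ factor uniformly in $x$.

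The principal obstacle, in contrast to the sharpest form of Klainerman--Sobolev, is precisely the absence of the scaling $S$ from $A$: one cannot directly express the weighted time derivative $t\del_t$ in terms of vector fields in $A$ with bounded coefficients. This forces the regularity threshold to be raised from the sharp $|I|\leq 2$ to $|I|\leq 3$, so that the missing scaling information is recovered from one additional boost or translation. The radial/angular Sobolev steps are classical (see \cite{Klainerman2}) and do not present a real technical difficulty.
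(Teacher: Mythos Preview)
The paper does not supply its own proof of this proposition; it simply states the inequality and refers the reader to Klainerman's original article \cite{Klainerman2}. Your sketch is a reasonable outline of that classical argument (angular Sobolev via $\Omega_{ab}$, radial Sobolev, and the identity $t\del_a = L_a - x^a\del_t$ to extract the $t^{-1}$ weight), so there is nothing to compare against in the paper itself. In that sense your proposal is consistent with what the paper does: both defer to the standard Klainerman--Sobolev machinery.

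One small caution: your explanation for why the order rises from $|I|\leq 2$ to $|I|\leq 3$ is a bit loose. The sharp version with $|I|\leq 2$ in $\RR^3$ already does not require the scaling $S$ for the bare $t^{-1}$ decay inside the light cone (scaling is what buys the extra $(1+|t-r|)^{1/2}$ factor); the extra derivative here is more naturally attributed to the particular formulation (taking a sup over $t'\in[0,2t]$ and using only $L_a,\del_\alpha,\Omega_{ab}$) rather than to recovering ``missing scaling information.'' This does not affect the validity of your argument, only the narrative around it.
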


We note that the importance of the Klainerman-Sobolev inequality to our problem is that the scaling vector field $L_0 = t \del_t + x^a \del_a$, which does not commute with $-\Box + m^2$, is not needed.

\paragraph{Sobolev-type inequality on the hyperboloids}

Following from \cite{PLF-YM-book},
we now introduce a Sobolev-type inequality adapted to the hyperboloids, which is important in obtaining the sup-norm estimates for both wave and Klein-Gordon components.

\begin{proposition} \label{prop:sobolev}
For all sufficiently smooth functions $u= u(t, x)$ supported in $\{(t, x): |x|< t - 1\}$ and for all  $s \geq 2$, one has 
\bel{eq:Sobolev2}
\sup_{\Hcal_s} \big| t^{3/2} u(t, x) \big|  
\lesssim \sum_{| J |\leq 2} \| L^J u \|_{L^2_f(\Hcal_s)},
\ee
in which the symbol $L$ denotes the Lorentz boosts. 
\end{proposition}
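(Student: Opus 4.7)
The plan is to reduce the estimate at an arbitrary point $(t_0, x_0) \in \Hcal_s$ to the standard Sobolev embedding $H^2 \hookrightarrow L^\infty$ on a fixed unit ball in $\RR^3$, via a $t_0$-dependent change of variables adapted to the natural geometric scale of the hyperboloid near $(t_0, x_0)$. The point is that the Lorentz boosts $L_a$ act as translation generators at scale $t_0$ in this geometry, so the rescaling will produce exactly the Jacobian factor $t_0^{3/2}$ that appears on the left-hand side of \eqref{eq:Sobolev2}.

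First I would fix $(t_0, x_0) \in \Hcal_s$ and introduce the rescaled auxiliary function
\[
v(y) := u\bigl(\sqrt{s^2 + |x_0 + t_0 y|^2},\; x_0 + t_0 y\bigr), \qquad y \in B(0, 1/4) \subset \RR^3.
\]
A direct differentiation using $L_a = x^a \del_t + t \del_a$ gives $\del_{y^a} v(y) = (t_0/T(y))\,(L_a u)\bigl(T(y), X(y)\bigr)$ with $T(y) := \sqrt{s^2 + |x_0 + t_0 y|^2}$ and $X(y) := x_0 + t_0 y$. Iterating, any $\del_y^I v$ becomes a linear combination of terms $(t_0/T)^{|J|}$ times $(L^J u)(T, X)$ for $|J| \leq |I|$, with coefficients bounded by quantities of the form $X^a/T$ which do not exceed $1$. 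Using $|x_0|^2 = t_0^2 - s^2 \leq t_0^2$, a short case distinction on whether $|x_0| \geq t_0/2$ or $|x_0| < t_0/2$ (in the latter case $s > \sqrt{3}t_0/2$) shows that $t_0/4 \leq T(y) \leq 2 t_0$ uniformly for $|y| \leq 1/4$, so all the prefactors $(t_0/T)^{|J|}$ are bounded by universal constants.

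Next I would apply the standard Sobolev embedding on $B(0, 1/4)$ to $v$ at the origin:
\[
|u(t_0, x_0)|^2 = |v(0)|^2 \lesssim \sum_{|I|\leq 2} \int_{B(0, 1/4)} |\del_y^I v(y)|^2 \, dy,
\]
bound each $|\del_y^I v|^2$ by $\sum_{|J|\leq |I|} |(L^J u)(T, X)|^2$ via the previous step, and change variables back via $x = x_0 + t_0 y$, which gives $dy = t_0^{-3}\, dx$. Since $(T(y), X(y))$ ranges over a subset of $\Hcal_s$, enlarging the $x$-integral to all of $\RR^3$ and comparing with \eqref{flat-int} yields $|u(t_0, x_0)|^2 \lesssim t_0^{-3} \sum_{|J|\leq 2} \|L^J u\|_{L^2_f(\Hcal_s)}^2$. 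Taking square roots and multiplying by $t_0^{3/2}$ delivers \eqref{eq:Sobolev2}, uniformly in $(t_0, x_0)$.

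The technical step deserving the most care is the verification that $T(y)$ is comparable to $t_0$ on the full ball $B(0, 1/4)$ regardless of whether $s \sim t_0$ (small $|x_0|$) or $s \ll t_0$ (large $|x_0|$), together with the commutator-style bookkeeping that expresses $\del_y^I v$ in terms of $L^J u$ with constants independent of $s$, $t_0$, $x_0$. Once this uniformity is in hand, the weight $t_0^{3/2}$ emerges automatically from the Jacobian of the $t_0$-rescaling, matching the optimal decay rate of free Klein-Gordon solutions on hyperboloids.
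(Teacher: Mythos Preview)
The paper does not give its own proof of this proposition; it is quoted from \cite{PLF-YM-book}. Your argument is correct and is precisely the standard rescaling proof that appears in that reference (and in \cite{Hormander}): fix a point $(t_0,x_0)\in\Hcal_s$, pull back $u$ to a unit ball via $x=x_0+t_0 y$ so that $\del_{y^a}$ becomes $(t_0/T)L_a$, verify that $T(y)\sim t_0$ uniformly on the ball, apply the Euclidean embedding $H^2\hookrightarrow L^\infty$, and read off the weight $t_0^{3/2}$ from the Jacobian $dy=t_0^{-3}\,dx$. Your case split on $|x_0|\gtrless t_0/2$ to control $T(y)$ from below is exactly what is needed, and your bookkeeping of the second derivatives (producing only $L^J u$ with $|J|\le 2$ and coefficients bounded by powers of $X^a/T$ and $t_0/T$) is accurate.
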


Following from the Sobolev inequality \eqref{eq:Sobolev2} and the commutator estimates, we have the following inequality
\be
\sup_{\Hcal_s} \big| s \hskip0.03cm t^{1/2} u(t, x) \big|  
\lesssim \sum_{| J |\leq 2} \| (s/t) L^J u \|_{L^2_f(\Hcal_s)}.
\ee

\paragraph{Hardy inequality}

\begin{proposition}
Let $\phi = \phi(x)$ be a sufficiently smooth function in dimensions $\geq 3$, then it holds
\be 
\big\| r^{-1} \phi \big\|
\leq 
C \sum_a \big\| \del_a \phi \big\|.
\ee
\end{proposition}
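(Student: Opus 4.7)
\medskip

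\noindent\textbf{Proof proposal.} The plan is to establish the classical weighted Hardy inequality via integration by parts against the radial vector field $x/r^2$. The key identity is
\[
\div\!\left(\frac{x}{r^2}\right) = \frac{n-2}{r^2}
\]
on $\RR^n \setminus \{0\}$ for $n \geq 3$, which one checks by direct differentiation: $\del_a(x^a/r^2) = 1/r^2 - 2(x^a)^2/r^4$, and summing over $a$ gives $n/r^2 - 2/r^2$. In particular for $n = 3$ one has $\div(x/r^2) = 1/r^2$.

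I would then multiply this identity by $|\phi|^2$ and integrate over $\RR^n \setminus B(0,\eps)$, carrying out the integration by parts
\[
(n-2) \int_{\RR^n \setminus B(0,\eps)} \frac{|\phi|^2}{r^2}\, dx
= -\int_{\RR^n \setminus B(0,\eps)} \frac{x^a}{r^2}\, \del_a(|\phi|^2)\, dx + \text{(boundary term on } \del B(0,\eps)\text{)}.
\]
The boundary integrand is of order $\eps^{n-1} \cdot \eps^{-1} = \eps^{n-2}$ in $n \geq 3$, so it vanishes as $\eps \to 0$ under the assumed regularity and (tacit) decay on $\phi$; the interior integral converges by the same argument. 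Expanding $\del_a(|\phi|^2) = 2 \phi\, \del_a \phi$ and applying the Cauchy--Schwarz inequality gives
\[
(n-2)\int \frac{|\phi|^2}{r^2}\, dx
\leq 2 \left( \int \frac{|\phi|^2}{r^2}\, dx \right)^{1/2} \left( \sum_a \| \del_a \phi \|^2 \right)^{1/2},
\]
from which one divides through to obtain $\| r^{-1} \phi \| \leq \frac{2}{n-2} \sum_a \| \del_a \phi \|$, which in particular yields the stated inequality with $C = 2/(n-2)$ in dimension $n \geq 3$.

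\medskip
The only nontrivial point is justifying the integration by parts across the singularity of $x/r^2$ at the origin, which is handled by the cutoff-and-limit procedure above; one can first prove the inequality for $\phi \in C_c^\infty(\RR^n)$ (where decay at infinity is automatic) and then extend to the general sufficiently regular class by a standard density argument. No issue arises from the large-$r$ boundary term under the implicit decay of $\phi$ built into the assumption that both sides are finite. The dimensional restriction $n \geq 3$ enters precisely through the positivity of the constant $n-2$ produced by the divergence identity; in dimension two the same argument breaks down, as is well known.
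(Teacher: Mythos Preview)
Your argument is correct and is the standard integration-by-parts proof of the classical Hardy inequality. The paper itself does not supply a proof of this proposition; it is listed in Section~\ref{sec:pre} among the preliminary ``Useful inequalities'' and simply quoted as a known result, so there is no authorial proof to compare against. Your derivation, including the divergence identity $\div(x/r^2)=(n-2)/r^2$, the excision of $B(0,\eps)$ to handle the singularity, and the Cauchy--Schwarz step, is exactly the textbook route and yields the sharp constant $2/(n-2)$, which is more than the paper needs.
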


One also has Hardy inequality adapted to the hyperboloids, see for instance \cite{PLF-YM-cmp}, where one replaces $\| \cdot \|$ by $\| \cdot \|_{L^2_f}$ and $\del_a$ by $\underdel_a$.


\subsection{Commutator estimates}

We now recall some well-known facts about the commutators among different vector fields. 

\begin{proposition}
The following relations are valid
\bel{eq:Va-commutator}
\aligned
&\hskip0.13cm
[\del_\alpha, L_a]
= \delta_{0\alpha} \del_a + \delta_{a\alpha} \del_t,
\qquad
[L_a, \Omega_{bc}]
=\delta_{ab} L_c - \delta_{ac} L_b,
\\
&[\del_\alpha, \Omega_{ab}]
= \delta_{b\alpha} \del_a - \delta_{a \alpha} \del_b,
\qquad
[\Gamma, -\Box + m^2]
=0,
\endaligned
\ee
for all $\Gamma \in A = \{ L_a, \del_\alpha, \Omega_{ab} = x^a \del_b - x^b \del_a \}$, with $\delta_{\alpha \beta}$ the Kronecker delta.
\end{proposition}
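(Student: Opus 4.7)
All four identities in \eqref{eq:Va-commutator} are first-order computations, so the plan is simply to evaluate each commutator by applying it to an arbitrary smooth test function, expanding the vector fields in Cartesian coordinates, and using only the elementary identity $[\del_\alpha, x^\beta] = \delta_\alpha^\beta$ (with $x^0 = t$). The Leibniz rule then reduces each commutator to its claimed form term by term, so the entire proposition amounts to careful bookkeeping rather than any genuinely new idea.

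First I would handle $[\del_\alpha, L_a]$: writing $L_a = x^a \del_t + t\,\del_a$, the only surviving contributions come from $[\del_\alpha, x^a]\,\del_t = \delta_{a\alpha}\del_t$ and $[\del_\alpha, t]\,\del_a = \delta_{0\alpha}\del_a$. The relation $[\del_\alpha, \Omega_{ab}]$ is treated identically, with only the spatial position functions $x^a, x^b$ entering. With these two pieces in hand, $[L_a, \Omega_{bc}]$ follows by expanding both operators, using the earlier identities to eliminate the second-order cross terms, and regrouping the remainders into $\delta_{ab}L_c - \delta_{ac}L_b$; the shape of the answer merely records that the boosts $L_a$ transform as a spatial vector under the rotations $\Omega_{bc}$.

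For the last identity $[\Gamma, -\Box + m^2] = 0$, I would first observe that $[\Gamma, m^2] = 0$ trivially since $m$ is a constant scalar, so the problem reduces to $[\Gamma, \Box] = 0$. The case $\Gamma = \del_\alpha$ is immediate from commutativity of partial derivatives. For $\Gamma = L_a$ and $\Gamma = \Omega_{ab}$, I would apply $[\Gamma, \del_\gamma \del_\delta] = [\Gamma, \del_\gamma]\,\del_\delta + \del_\gamma\,[\Gamma, \del_\delta]$ to each term of $\Box = \eta^{\gamma\delta}\del_\gamma\del_\delta = -\del_t^2 + \sum_b \del_b^2$, plug in the first-order commutators already proved, and verify that the total sum vanishes. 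For $\Omega_{ab}$ this cancellation is transparent by antisymmetry in the pair $(a, b)$.

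The only delicate point, and the piece I would do last, is the $L_a$--$\Box$ commutator. Here the contributions from $-\del_t^2$ and $+\sum_b \del_b^2$ must cancel against one another, so one must track the signs coming from $[L_a, \del_0] = -\del_a$ and $[L_a, \del_b] = -\delta_{ab}\del_t$ together with the opposite signs in the Minkowski metric. The successful cancellation is exactly the statement of Poincar\'e invariance of $\Box$, and that is essentially where the Minkowski signature plays its only role in the proof.
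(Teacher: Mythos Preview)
Your proof is correct and complete. The paper itself does not supply a proof of this proposition; it simply introduces the relations as ``well-known facts about the commutators among different vector fields'' and states them without further argument. Your direct computation from the definitions, using $[\del_\alpha, x^\beta] = \delta_\alpha^\beta$ and the Leibniz rule, is exactly the standard verification and fills in what the paper leaves implicit.
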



\section{Unified decay results for linear Klein-Gordon equations}\label{sec:linear}

\subsection{The homogeneous case}

We first consider a simple homogeneous Klein-Gordon equation with $m \in [0, 1]$
\bel{eq:3DmassH} 
\aligned
- \Box w + m^2 w 
&= 0,
\\
\big( w, \del_t w \big) (0, \cdot)
&=
\big( w_0, w_1 \big),
\endaligned
\ee
and prove the following theorem.

\begin{proposition}[Unified decay results for homogeneous Klein-Gordon equations]
\label{prop:3DmassH}
Consider the initial value problem \eqref{eq:3DmassH}, and assume the initial data are compactly supported and satisfy
\be 
\|w_0\|_{H^5(\RR^3)} + \|w_1 \|_{H^4(\RR^3)}
\leq \eps,
\ee
then the following unified decay result is valid
\bel{eq:3DmassHw} 
|w|
\lesssim \eps \min \{ (t + 2)^{-1}, m^{-1} (t + 2)^{-3/2} \}.
\ee
\end{proposition}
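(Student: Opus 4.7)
The plan is to establish the two bounds $|w(t,x)| \lesssim \eps (t+2)^{-1}$ and $|w(t,x)| \lesssim \eps m^{-1}(t+2)^{-3/2}$ separately, then take the minimum. Both rely crucially on the fact, recorded in \eqref{eq:Va-commutator}, that every vector field $\Gamma \in A = \{L_a, \del_\alpha, \Omega_{ab}\}$ commutes exactly with the Klein-Gordon operator $-\Box + m^2$, so $\Gamma^I w$ satisfies the same homogeneous equation as $w$.

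For the wave-type bound I would work on the flat foliation. Differentiating by $\Gamma^I$ with $|I| \leq 3$, the flat energy $\Ecal_m(t, \Gamma^I w) = \sum_\alpha \|\del_\alpha \Gamma^I w\|^2 + m^2 \|\Gamma^I w\|^2$ is conserved in $t$ for the homogeneous equation. At $t = 0$ the boosts act as $L_a|_{t=0} = x^a \del_t$, and by repeated use of the equation to trade $\del_t^k w(0)$ for spatial derivatives of $(w_0, w_1)$, $\Gamma^I w|_{t=0}$ becomes a polynomial in $x$ and $m$ acting on spatial derivatives of the data; on the compact support of $(w_0, w_1)$ this is bounded by $\|w_0\|_{H^5} + \|w_1\|_{H^4} \lesssim \eps$ uniformly in $m \in [0, 1]$. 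The Klainerman-Sobolev inequality \eqref{eq:Va-K-S} then yields $|w(t,x)| \lesssim (t+2)^{-1} \eps$ for $t \geq 2$, while for $t \in [0, 2]$ the bound follows from the same energy and standard Sobolev embedding.

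For the Klein-Gordon decay, finite speed of propagation and compact support of the data place the solution inside a forward light cone; after a harmless time translation absorbed into the $(t+2)$ shift, we may foliate by the hyperboloids $\Hcal_s$ with $s \geq 2$. Since $[L^J, -\Box + m^2] = 0$, each $L^J w$ also solves the homogeneous equation, and \eqref{eq:w-EE} gives $E_m(s, L^J w) = E_m(2, L^J w) \lesssim \eps^2$ for $|J| \leq 2$. Extracting only the mass piece from the energy produces $m \|L^J w\|_{L^2_f(\Hcal_s)} \leq E_m(s, L^J w)^{1/2} \lesssim \eps$, so dividing by $m$ and applying the hyperboloidal Sobolev inequality \eqref{eq:Sobolev2} to $w$ gives
$$\sup_{\Hcal_s} |t^{3/2} w(t,x)| \lesssim \sum_{|J| \leq 2} \|L^J w\|_{L^2_f(\Hcal_s)} \lesssim m^{-1} \eps,$$
which is precisely the $\eps m^{-1}(t+2)^{-3/2}$ bound since $t \geq s \geq 2$ on each hyperboloid.

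The only real obstacle is the appearance of the factor $m^{-1}$ in the second bound; this is intrinsic to the scheme, since the sole mechanism for recovering $\|w\|_{L^2_f(\Hcal_s)}$ from the conserved hyperboloidal energy is through the mass term $m^2 \|w\|_{L^2_f}^2 \leq E_m$, which is exactly the first obstruction listed in the introduction. In the purely homogeneous linear setting considered here one simply takes the minimum of the two estimates and the proposition follows; the more delicate question, of how to propagate such a unified estimate in the inhomogeneous and nonlinear settings without multiplying out further $m^{-1}$ factors, is what the later sections of the paper must confront.
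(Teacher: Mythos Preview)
Your argument for the $m^{-1}(t+2)^{-3/2}$ bound via the hyperboloidal energy and Proposition~\ref{prop:sobolev} is fine, and the paper treats that part the same way (it calls it ``trivial'').

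The gap is in your $t^{-1}$ bound. The Klainerman--Sobolev inequality \eqref{eq:Va-K-S} requires control of $\|\Gamma^I w\|_{L^2(\RR^3)}$ for $|I|\le 3$, not of the energy $\Ecal_m(t,\Gamma^I w)$. The conserved flat energy gives you $\sum_\alpha\|\del_\alpha \Gamma^I w\|$ and $m\|\Gamma^I w\|$, but not $\|\Gamma^I w\|$ uniformly in $m\in[0,1]$: dividing by $m$ reintroduces exactly the bad factor you are trying to avoid, and this is precisely the first obstruction listed in the introduction. Nor can you recover $\|\Gamma^I w\|$ from $\|\del\Gamma^I w\|$ by Poincar\'e, since the support has radius $\sim t$. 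So from the conserved energies alone you can only conclude $|\del w|\lesssim \eps t^{-1}$, not $|w|\lesssim \eps t^{-1}$.

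The paper's proof takes a genuinely different route to bypass this: it writes the solution explicitly in Fourier space,
\[
\wh(t,\xi)=\cos(t\xi_m)\,\wh_0(\xi)+\frac{\sin(t\xi_m)}{\xi_m}\,\wh_1(\xi),\qquad \xi_m=\sqrt{4\pi^2|\xi|^2+m^2},
\]
and obtains $|w(t,x)|\lesssim t^{-1}$ by a direct integration by parts in the inverse Fourier integral (using $\del_\rho e^{2\pi i t\xi_m}=2\pi i t\,\rho\,\xi_m^{-1}e^{2\pi i t\xi_m}$ in suitable polar coordinates), exploiting the compact support to bound $(1+|\xi|^2)^2|\del_\xi \wh_j|$ in $L^\infty$. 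The point is that the phase $\xi_m$ is used only through $\del_\rho\xi_m=\rho/\xi_m$, which is bounded uniformly in $m$, so no $m^{-1}$ ever appears. The $L^2$ analogue of this idea is exactly Proposition~\ref{prop:Va-key}: $\xi_m\ge |\xi|$ combined with Hardy in frequency yields a uniform $\|w\|_{L^2}$ bound, which is the missing ingredient your vector-field scheme would need. If you want to salvage your approach, you must invoke that $L^2$ estimate (or an equivalent) before applying \eqref{eq:Va-K-S}.
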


The proof relies on a simple utilization of the Fourier method, which is from the lecture note by Luk \cite{Luk} in treating homogeneous wave equations.
We first revisit some basics in Fourier analysis before giving the proof.

Recall the Fourier transform of a nice function $\phi = \phi(x)$ is defined by
$$
\phih(\xi)
:=
\int_{\RR^3} \phi(x) e^{- 2 \pi i x \cdot \xi} \, dx,
$$
and the inverse Fourier transform of a nice function $\psi = \psi(\xi)$ is defined by
$$
\widecheck{\psi}(x)
:=
\int_{\RR^3} \psi(\xi) e^{2 \pi i x \cdot \xi} \, d\xi.
$$
Next we recall some basic but important facts in Fourier analysis.

\begin{proposition}
The following properties hold for a nice function $\phi = \phi(x)$:
\bei
\item Inverse formula.
\be 
\phi
= \widecheck{\phih}.
\ee
\item Relation between partial derivatives and Fourier multipliers.
\be 
\widehat{\del_a \phi} (\xi)
= 2\pi i \xi_a \phih(\xi).
\ee
\item Plancheral identity.
\be
\| \phi \|_{L^2(\RR^3)}
= \big\| \phih \big\|_{L^2(\RR^3)}.
\ee
\eei
\end{proposition}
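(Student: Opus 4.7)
The plan is to establish the three identities in the order listed, with the proofs for the Schwartz class $\mathcal{S}(\RR^3)$ and then extended to $L^2$ (or the ``nice'' class implicit in the statement) by density. All three are classical results, and the strategy will be to choose Gaussians as the test objects that link physical and frequency side by explicit computation.

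First I would prove the derivative-to-multiplier relation, which is the easiest. The idea is to write
\be
\widehat{\del_a \phi}(\xi) = \int_{\RR^3} \del_a \phi(x) \, e^{-2\pi i x \cdot \xi}\, dx,
\ee
and integrate by parts in the $x^a$ variable. For Schwartz $\phi$ there are no boundary contributions, and the derivative falls on the exponential producing the factor $-(-2\pi i \xi_a) = 2\pi i \xi_a$, which yields the claim. For general ``nice'' $\phi$, a cutoff-and-mollification argument reduces to this case.

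Next I would prove the inversion formula. The standard trick is to insert a Gaussian regulariser: set $g_\eps(x) = e^{-\pi \eps^2 |x|^2}$, whose Fourier transform is $\eps^{-3} e^{-\pi |\xi|^2/\eps^2}$, computed by reducing to a one-dimensional Gaussian integral and completing the square. Using the multiplication formula
\be
\int_{\RR^3} \phih(\xi)\, \psi(\xi)\, d\xi = \int_{\RR^3} \phi(x)\, \widehat{\psi}(x)\, dx,
\ee
which itself follows from Fubini, and applying it with $\psi(\xi) = g_\eps(\xi)\, e^{2\pi i x \cdot \xi}$, one obtains
\be
\int_{\RR^3} \phih(\xi)\, g_\eps(\xi)\, e^{2\pi i x \cdot \xi}\, d\xi = \int_{\RR^3} \phi(y)\, \eps^{-3} e^{-\pi |x-y|^2/\eps^2}\, dy.
\ee
The right-hand side is a convolution of $\phi$ with an $L^1$-normalised Gaussian approximate identity, so it converges to $\phi(x)$ as $\eps \to 0$, while the left-hand side converges to $\widecheck{\phih}(x)$ by dominated convergence once $\phih \in L^1$. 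This gives the inversion formula on Schwartz functions.

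Finally I would derive Plancherel by a polarisation-and-inversion argument. Given $\phi$, define $\psi(x) := \overline{\phi(-x)}$; a change of variables shows $\widehat{\psi}(\xi) = \overline{\phih(\xi)}$. Applying the multiplication formula to the pair $\phi, \widehat{\psi}$ and using the inversion formula gives
\be
\int_{\RR^3} |\phih(\xi)|^2\, d\xi = \int_{\RR^3} \phih(\xi)\, \overline{\phih(\xi)}\, d\xi = \int_{\RR^3} \phi(x)\, \overline{\phi(x)}\, dx = \| \phi \|_{L^2}^2,
\ee
which is the Plancheral identity on Schwartz functions. Density of Schwartz in $L^2$ extends the identity, and simultaneously defines the Fourier transform as a unitary operator on $L^2$.

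The only real obstacle is the inversion step, since it requires the explicit Gaussian computation and the approximate-identity convergence; the other two items follow either by integration by parts or by a short polarisation argument once inversion is in hand.
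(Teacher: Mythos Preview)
Your argument is correct and follows the standard textbook route (Gaussian regularisation for inversion, integration by parts for the derivative identity, multiplication formula plus inversion for Plancherel). Note, however, that the paper does not actually prove this proposition: it is stated as a list of ``basic but important facts in Fourier analysis'' and used without justification, so there is no proof in the paper to compare against. Your write-up simply supplies what the paper takes for granted.
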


\begin{proof}[Proof of Proposition \ref{prop:3DmassH}]
In the Fourier space, the equation \eqref{eq:3DmassH} can be written as
$$
\del_t \del_t \wh(t, \xi) + \xi_m^2 \wh(t, \xi)
= 0,
$$
with initial data
$$
\big( \wh, \del_t \wh \big) (0, \cdot)
=
\big( \wh_0, \wh_1 \big),
$$ 
in which we used the notation 
$$
\xi_m 
:= \big( 4 \pi^2 |\xi|^2 + m^2 \big)^{1/2}.
$$
Next by solving the ordinary differential equation above, we get the explicit solution in the Fourier space
\be 
\wh(t, \xi)
=
\cos(2 \pi t \xi_m) \wh_0(\xi)
+ {\sin(2 \pi t \xi_m) \over 2 \pi \xi_m} \wh_1(\xi),
\ee
which can also be expressed by the following four terms.
\be 
\wh(t, \xi)
= 
e^{2\pi i t \xi_m} \Big( {\wh_0(\xi) \over 2} + {\wh_1(\xi) \over 2\pi i \xi_m} \Big)
+ e^{2\pi i t \xi_m} \Big( {\wh_0(\xi) \over 2} + {\wh_1(\xi) \over 2\pi i \xi_m} \Big).
\ee

Then we estimate the inverse Fourier transform of those four terms above, but we notice that it suffices to estimate the first two terms.
We denote by the inverse Fourier transform of the second term 
$$
I_1
:= \int_{\RR^3} e^{2\pi i (t \xi_m + x \cdot \xi)} {\wh_1(\xi) \over 2\pi i \xi_m} \, d\xi.
$$
Without loss of any generality, we assume 
$$
x
= (0, 0, |x|),
$$
and we use the polar coordinates for the first two components of $\xi$, i.e.
$$
\xi
= \big( \rho \cos \xi_\theta, \rho \sin \xi_\theta, \xi_3 \big),
\qquad
(\rho, \xi_\theta) \in [0, +\infty) \times [0, 2\pi),
$$
and thus
$$
d\xi
= \rho d\rho d\xi_\theta d\xi_3.
$$
It also helps to note that
$$
{\del |\xi| \over \del \rho}
= {\rho \over |\xi|},
\qquad
{\del \xi_m \over \del \rho}
= {\rho \over \xi_m},
$$
as well as
$$
\del_\rho e^{2\pi i t \xi_m}
=
2\pi i t {\rho \over \xi_m} e^{2\pi i t \xi_m}.
$$
Now relying on these results we further arrive at
$$
\aligned
I_1
&=
-{1\over 4 \pi^2  t} \int_{\RR} \int_0^{2\pi} \int_0^{+\infty} \del_\rho e^{2\pi i t \xi_m} e^{2\pi i x\cdot \xi} \wh_1 \, d\rho d\xi_\theta d\xi_3
\\
&=
{1\over 4 \pi^2  t} \int_{\RR} \int_0^{2\pi} \int_0^{+\infty} e^{2\pi i (t \xi_m + x\cdot \xi)} \del_\rho \wh_1 \, d\rho d\xi_\theta d\xi_3
- {1\over 2\pi t} \int_{\RR} e^{2\pi i (t \xi_m + |x| \xi_3)} \wh_1(\rho=0, \xi_3) \, d\xi_3
\\
&=: I_{11} + I_{12},
\endaligned
$$
where we did integration by parts in the second step. 
Observe that 
$$
\int_{\RR} \int_0^{2\pi} \int_0^{+\infty} {1 \over (1 + |\xi|^2)^2} \, d\rho d\xi_\theta d\xi_3
\lesssim 1,
$$
as well as 
$$
\aligned
\sum_a  (1 + |\xi|^2)^2 |\del_a \wh_1 | 
&\lesssim \sum_a \| (1 - \Delta)^2 (x_a w_1) \|_{L^1(\RR^3)}
\\
&\leq \| w_1 \|_{H^4(\RR^3)},
\endaligned
$$
where we used the fact in the last step that
$$
L^p(\Omega) \subset L^1(\Omega),
\qquad
p \geq 1,
$$
when $\Omega \subset \RR^d$ is a compact set. Thus we arrive at
$$
|I_{11}|
\lesssim t^{-1} \| w_1 \|_{H^4(\RR^3)},
$$
and similarly we can show 
$$
|I_{12}|
\lesssim t^{-1} \| w_1 \|_{H^4(\RR^3)}.
$$
To conclude, we have 
\bel{eq:3DmassI1}
|I_1|
\lesssim t^{-1} \| w_1 \|_{H^4(\RR^3)}.
\ee

Next we do the same analysis on the inverse Fourier transform of the first term, which we denote by
$$
I_0
:=
\int_{\RR^3} e^{2\pi i (t \xi_m + x \cdot \xi)} {\wh_0(\xi) \over 2} \, d\xi.
$$
By adopting the same setting, we proceed and get
$$
\aligned
I_0
&=
{1\over 4\pi i t} \int_{\RR} \int_0^{2\pi} \int_0^{+\infty} \del_\rho e^{2\pi i t \xi_m} \xi_m e^{2\pi i x\cdot \xi} \wh_0 \, d\rho d\xi_\theta d\xi_3
\\
&=
- {1\over 4\pi i t} \int_{\RR} \int_0^{2\pi} \int_0^{+\infty} e^{2\pi i t \xi_m} {\rho \over \xi_m} e^{2\pi i x\cdot \xi} \wh_0 \, d\rho d\xi_\theta d\xi_3
\\
&- {1\over 4\pi i t} \int_{\RR} \int_0^{2\pi} \int_0^{+\infty} e^{2\pi i t \xi_m} \xi_m e^{2\pi i x\cdot \xi} \del_\rho \wh_0 \, d\rho d\xi_\theta d\xi_3
\\
&- {1\over 2 i t} \int_{\RR} \big( \xi_3^2 + m^2 \big)^{1/2} e^{2\pi i (t \xi_m + |x| \xi_3)} \wh_0(\rho=0, \xi_3) \, d\xi_3.
\endaligned
$$
Similarly, we conclude that
\bel{eq:3DmassI0}
|I_0|
\lesssim t^{-1} \| w_0 \|_{H^4(\RR^3)}.
\ee 

A combination of \eqref{eq:3DmassI1} and \eqref{eq:3DmassI0} gives
\be
|w(t, x)|
\lesssim (t + 2)^{-1} \big( \| w_0 \|_{H^4(\RR^3)} + \| w_1 \|_{H^4(\RR^3)} \big),
\qquad
t \geq 2.
\ee
On the other hand, we observe that it is easy to show 
\be 
| w(t, x) |
\lesssim \| w_0 \|_{H^4(\RR^3)} + \| w_1 \|_{H^4(\RR^3)},
\qquad
0 \leq t \leq 2.
\ee
Hence we arrive at \eqref{eq:3DmassHw} since the bound $m^{-1} (t + 2)^{-3/2}$ is trivial to prove.
\end{proof}


\subsection{The inhomogeneous case}

\begin{proposition}\label{prop:Va-key}
Consider the wave-Klein-Gordon equation
$$
-\Box u + m^2 u 
= f,
\qquad
\big( u, \del_t u \big)(t_0)
= (u_0, u_1),
$$
with mass $m \in [0, 1]$, and assume 
$$
\| u_0 \|_{L^2(\RR^3)}
+
\| x u_1 \|_{L^2(\RR^3)}
\lesssim C_{t_0},
\quad
\| x f \|_{L^2(\RR^3)}
\leq C_f t^{-1 + q},
$$
for some numbers $C_{t_0}$ and $C_f$.
Then we have
\begin{eqnarray}
\| u \|_{L^2(\RR^3)}
\lesssim 
\left\{
\begin{array}{lll}
C_{t_0} + C_f t^q, & \quad q>0,
\\
C_{t_0} + C_f \log t, & \quad q = 0,
\\
C_{t_0} + C_f, & \quad q<0.
\end{array}
\right.
\end{eqnarray}
\end{proposition}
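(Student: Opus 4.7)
The plan is to mirror the Fourier-analysis approach used in Proposition \ref{prop:3DmassH}, supplementing it with Duhamel's principle, and then to convert the bad low-frequency factor $1/\xi_m$ into the weighted norm $\|x\,\cdot\|_{L^2}$ by a duality argument based on the Hardy inequality already stated in the paper. Writing $\xi_m := (4\pi^2|\xi|^2 + m^2)^{1/2}$ as before, the Fourier transform turns the equation into the ODE $\partial_t^2 \hat u + \xi_m^2 \hat u = \hat f$, whose Duhamel representation reads
\[
\hat u(t,\xi) = \cos((t-t_0)\xi_m)\hat u_0(\xi) + \frac{\sin((t-t_0)\xi_m)}{\xi_m}\hat u_1(\xi) + \int_{t_0}^t \frac{\sin((t-s)\xi_m)}{\xi_m}\hat f(s,\xi)\,ds.
\]
By Plancherel and the trivial bounds $|\cos|,|\sin|\leq 1$, it then suffices to control
\[
\|u(t)\|_{L^2} \leq \|u_0\|_{L^2} + \Big\|\tfrac{\hat u_1}{\xi_m}\Big\|_{L^2} + \int_{t_0}^t \Big\|\tfrac{\hat f(s,\cdot)}{\xi_m}\Big\|_{L^2}\,ds.
\]

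The crucial intermediate estimate is the $m$-uniform inequality
\[
\Big\|\tfrac{\hat g}{\xi_m}\Big\|_{L^2(\RR^3)} \lesssim \|xg\|_{L^2(\RR^3)}, \qquad m\in[0,1].
\]
Since $\xi_m \geq 2\pi|\xi|$, one has $\|\hat g/\xi_m\|_{L^2} \lesssim \|(-\Delta)^{-1/2}g\|_{L^2}$; dualizing Hardy's inequality $\|r^{-1}\phi\|_{L^2}\lesssim \|\nabla\phi\|_{L^2}$ from Section \ref{sec:pre} yields
\[
\|(-\Delta)^{-1/2}g\|_{L^2} = \sup_{\|\phi\|_{\dot H^1}=1}|\langle g,\phi\rangle| \leq \sup_{\|\phi\|_{\dot H^1}=1}\|rg\|_{L^2}\|r^{-1}\phi\|_{L^2} \lesssim \|xg\|_{L^2}.
\]
Applying this with $g=u_1$ and with $g=f(s,\cdot)$ and inserting the hypothesis $\|xf(s)\|_{L^2}\leq C_f s^{-1+q}$ reduces the problem to
\[
\|u(t)\|_{L^2} \lesssim C_{t_0} + C_f \int_{t_0}^t s^{-1+q}\,ds,
\]
and the three declared regimes $t^q$, $\log t$, $O(1)$ follow at once from evaluating the integral according to the sign of $q$.

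The main obstacle I anticipate is precisely the joint singularity of $\xi_m^{-1}$ as $m\to 0$ and $|\xi|\to 0$: a naive bound $\xi_m^{-1}\leq m^{-1}$ would cost a factor blowing up as $m\to 0$ and destroy uniformity. The point of the weighted hypotheses $\|xu_1\|_{L^2}$ and $\|xf\|_{L^2}$ is that, via Hardy duality, the $x$-weight absorbs exactly the low-frequency divergence of $\xi_m^{-1}$, producing bounds independent of $m\in[0,1]$. This uniform $L^2$ control on $u$ is, as the introduction announces, the ingredient that the standard Klein-Gordon energy method cannot provide and that permits the iteration scheme of Section \ref{sec:iteration} to close.
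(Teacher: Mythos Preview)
Your proof is correct and follows essentially the same approach as the paper: Fourier transform plus Duhamel, the trivial bounds $|\sin|,|\cos|\le 1$, the uniform-in-$m$ inequality $\xi_m \ge 2\pi|\xi|$, and then Hardy to convert $\|\hat g/|\xi|\|_{L^2}$ into $\|xg\|_{L^2}$. The only cosmetic difference is that the paper applies the Hardy inequality directly in frequency space, writing $\|\hat g/|\xi|\|_{L^2_\xi}\lesssim\|\nabla_\xi\hat g\|_{L^2_\xi}=\|xg\|_{L^2_x}$, whereas you reach the same bound by the duality argument $\|g\|_{\dot H^{-1}}\lesssim\|xg\|_{L^2}$ in physical space; the two are equivalent.
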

\begin{proof}
We write the $u$ equation and solution in the Fourier space $(t, \xi)$:
$$
\del_t \del_t \uh + \xi_m^2 \uh
= \widehat{f},
$$
$$
\uh(t, \xi)
= \cos\big( t \xi_m \big) \uh_0
+ { \sin\big( t \xi_m \big) \over \xi_m } \uh_1
+ {1\over \xi_m} \int_{t_0}^{t} \sin\big( (t-t') \xi_m \big) \widehat{f}(t') \, dt'.
$$
in which
$$
\uh_0 = \widehat{u_0},
\qquad
\uh_1 = \widehat{u_1},
\qquad
\xi_m = \sqrt{4 \pi^2 |\xi|^2 + m^2} \geq |\xi|.
$$

Next by the fact $|\sin p|,  |\cos p| \leq 1$, we have the $L^2$ norm estimates 
$$
\aligned
&\quad
\| u \|_{L^2(\RR^3)}
\\
&= 
\| \uh \|_{L^2(\RR^3)}
\leq
\| \uh_0 \|_{L^2(\RR^3)}
+
\| \uh_1 / \xi_m \|_{L^2(\RR^3)}
+
\int_{t_0}^t \big\| \widehat{f} / \xi_m \big\|_{L^2(\RR^3)} (t') \, dt'
\\
&\leq 
\| \uh_0 \|_{L^2(\RR^3)}
+
\| \uh_1 / |\xi| \|_{L^2(\RR^3)}
+
\int_{t_0}^t \big\| \widehat{f} / |\xi| \big\|_{L^2(\RR^3)} (t') \, dt',
\endaligned
$$
where we use the fact $|\xi| \leq \xi_m$ in the last step.

An application of the Hardy inequality in the frequency space gives
$$
\aligned
\| u \|_{L^2(\RR^3)}
&\lesssim 
\| \uh_0 \|_{L^2(\RR^3)}
+
\| \del_{\xi} \uh_1 \|_{L^2(\RR^3)}
+
\int_{t_0}^t \big\| \del_\xi \widehat{f} \big\|_{L^2(\RR^3)} (t') \, dt'
\\
&\lesssim 
\| u_0 \|_{L^2(\RR^3)}
+
\| x u_1 \|_{L^2(\RR^3)}
+
\int_{t_0}^t \big\| x f \big\|_{L^2(\RR^3)} (t') \, dt'.
\endaligned
$$
The proof is complete by recalling the assumptions on $f$ and the basic calculations.
\end{proof}


\section{Setting of the iteration process}\label{sec:iteration}

\subsection{The solution space and the solution map}

The goal of this section is to design a proper solution space $X$, with a solution map $T : X \to X$. We will show that the map $T$ is a contraction map by carefully choosing parameters in the space $X$.

We now introduce the $X$-norm of a sufficiently regular function set $(\phi, \psi) = \big(\phi(t, x), \psi(t, x) \big)$, which is defined by
\bel{eq:Va-Xnorm}
\aligned
\| (\phi, \psi) \|_X
&:=
\sup_{t \geq 2} \sum_{|I| \leq N, \Gamma \in A} \Big(  \Ecal_1(t, \Gamma^I \phi)  + t^{-\delta} \Ecal_1(t, \Gamma^I \psi)  
             \Big)
\\
&             + \sup_{s \geq 2} \sum_{|J| \leq N-5, \Gamma \in A} \bigg( E_m (s, \Gamma^J \phi)^{1/2} + s^{-\delta} E_1 (s, \Gamma^J \psi)^{1/2}\bigg),
\endaligned
\ee
in which $0< \delta \ll 1/10$ and $C_1 \gg 1$ are some constants to be determined, which are fixed once and for all.

Taking the initial data to the model problem \eqref{eq:Va-model} into account, we are now ready to introduce the solution space
\bel{eq:Va-Xspace}
\aligned
X :=
\Big\{ \big(u(t, x), v(t,x)\big):
\big( u, v\big)(t_0, \cdot)
= (u_0, v_0),
\big( \del_t u, \del_t v\big)(t_0, \cdot)
= (u_1, v_1), 
\| (u, v)\|_X \leq C_1 \eps \Big\},
\endaligned
\ee
in which the same $C_1 \gg 1$ is some constant to be determined, and $\eps$ is the size of the initial data. It is not hard to see that the function space $X$ is complete with respect to the metric $\| \cdot \|_X$.

Naturally, the image $T(u, v) = (\phi, \psi)$ of $(u, v) \in X$ is defined as the solution to the linear Klein-Gordon equations
\be
\aligned
- \Box \phi + m^2 \phi
&= M_1 v^3 + P^\alpha \del_\alpha ( v^2),
\\
- \Box \psi + \psi
&= N_1 (\del_t u)^2 + N_2 u^3 + N_3 u v,
\\
\big( \phi, \psi \big)(t_0, \cdot)
= (u_0, & v_0),
\qquad
\big( \del_t \phi, \del_t \psi \big)(t_0, \cdot)
= (u_1, v_1).
\endaligned
\ee

The main task in this section is to prove the following proposition.

\begin{proposition}\label{prop:Va-contraction0}
With properly chosen parameters $\eps, \delta, C_1$, the solution map $T$ satisfies the following contraction property
\be 
\| T(u, v) \|_X
\leq 
{1\over 2} C_1 \epsilon 
~for ~all ~ (u, v) \in X.
\ee
\end{proposition}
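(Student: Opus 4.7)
Fix $(u,v) \in X$ with $\|(u,v)\|_X \le C_1 \eps$; we must show that the components $(\phi,\psi)$ of $T(u,v)$ satisfy $\|(\phi,\psi)\|_X \le \tfrac12 C_1\eps$. The plan is to estimate the four ingredients of the $X$-norm \eqref{eq:Va-Xnorm} separately and then choose $C_1 \gg 1$ large and $\eps \ll 1$ small. By finite propagation speed, the supports of $u,v,\phi,\psi$ at time $t$ lie in the ball $\{|x|\le t-1\}$, so the hyperboloidal Sobolev inequality of Proposition \ref{prop:sobolev} is available. From the $X$-bound on $(u,v)$, the Klainerman--Sobolev estimate \eqref{eq:Va-K-S} yields the flat decay $|\Gamma^I u|,|\Gamma^I v| \lesssim C_1\eps\,t^{-1}$ up to $|I|\le N-3$ (with a harmless $t^{\delta/2}$ for $v$), and Proposition \ref{prop:sobolev} gives the sharper Klein--Gordon decay $|\Gamma^J v| \lesssim C_1\eps\,t^{-3/2}s^{\delta}$ up to $|J|\le N-7$.

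For the top-order flat energies ($|I|\le N$), I commute $\Gamma^I$ through the equations using $[\Gamma,-\Box+m^2]=0$ from \eqref{eq:Va-commutator} and apply the standard flat energy identity, which controls $\sum_\alpha \|\partial_\alpha \Gamma^I\phi\|^2 + m^2\|\Gamma^I\phi\|^2$. For $\psi$, whose source is $N_1(\partial_t u)^2 + N_2 u^3 + N_3 uv$, placing the highest-order factor in $L^2$ and the rest pointwise gives $\|\Gamma^I(\mathrm{source})\|_{L^2} \lesssim (C_1\eps)^2 t^{-1}$; integrating in time loses only a factor $t^\delta$, which is absorbed by the $t^{-\delta}$ weight in \eqref{eq:Va-Xnorm}. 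For $\phi$, whose source is $M_1 v^3 + P^\alpha\partial_\alpha(v^2)$, the fast decay of $v$ makes the source $L^1_t L^2_x$-integrable.

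The delicate point is that $\Ecal_1$ contains $\|\Gamma^I\phi\|^2$ without a factor of $m^2$, which the mass-$m$ energy identity does not see when $m$ is small. This is precisely what Proposition \ref{prop:Va-key} supplies: the support property $|x|\le t-1$ gives $\|x\,\Gamma^I(\mathrm{source})\|_{L^2} \lesssim t\,\|\Gamma^I(\mathrm{source})\|_{L^2}$, and the pointwise decay of $v$ renders this bounded by $(C_1\eps)^2 t^{-1+q}$ with $q<0$, so Proposition \ref{prop:Va-key} delivers the uniform-in-$m$ bound $\|\Gamma^I\phi\|_{L^2}\lesssim \eps + (C_1\eps)^2$. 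Combined with the derivative bound this closes the $\Ecal_1$ estimate for $\phi$.

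For the lower-order hyperboloidal energies ($|J|\le N-5$), I commute $\Gamma^J$ with the equations and apply the hyperboloidal energy identity \eqref{eq:w-EE}. Bounding the sources in $L^2_f(\Hcal_s)$ uses the hyperboloidal Sobolev decay of $v$ and the flat decay of $u$ transported to hyperboloids via $t = (s^2+r^2)^{1/2}$, giving cubic-type bounds $\lesssim (C_1\eps)^2 s^{-1+O(\delta)}$ whose time integrals are absorbed by the $s^{-\delta}$ weight for $\psi$ and are uniformly bounded for $\phi$. Summing the four pieces yields $\|T(u,v)\|_X \le C\eps + C(C_1\eps)^2 + C(C_1\eps)^3$; choosing $C_1$ large so that $C\eps \le \tfrac14 C_1\eps$, and then $\eps$ small so that $C(C_1\eps)^2 + C(C_1\eps)^3 \le \tfrac14 C_1\eps$, gives $\|T(u,v)\|_X \le \tfrac12 C_1\eps$. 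The main obstacle throughout is the uniform-in-$m$ $L^2$ control on $\Gamma^I\phi$ handled in the third paragraph; once this is established via Proposition \ref{prop:Va-key}, the remaining estimates are routine applications of the flat and hyperboloidal machinery of Section \ref{sec:pre}.
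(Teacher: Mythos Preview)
Your overall architecture matches the paper's, but there is a genuine gap in the step you flag as ``the delicate point.'' You propose to apply Proposition~\ref{prop:Va-key} directly to the full source $M_1 v^3 + P^\alpha\partial_\alpha(v^2)$ of the $\phi$-equation, claiming that $\|x\,\Gamma^I(\text{source})\|_{L^2}\lesssim (C_1\eps)^2 t^{-1+q}$ with $q<0$. This is true for the cubic piece $M_1 v^3$, but it fails for the quadratic divergence piece. Indeed, at top order $|I|=N$ the best one can do for $\Gamma^I(v\,\partial_\alpha v)$ is to place one factor in $L^\infty$ with the Klein--Gordon decay $t^{-3/2+\delta}$ and the other in $L^2$ with bound $\lesssim C_1\eps\, t^{O(\delta)}$, giving
\[
\big\|x\,\Gamma^I\big(P^\alpha\partial_\alpha(v^2)\big)\big\|_{L^2}
\;\lesssim\; t\cdot (C_1\eps)^2\, t^{-3/2+O(\delta)}
\;=\;(C_1\eps)^2\, t^{-1/2+O(\delta)},
\]
i.e.\ $q=\tfrac12+O(\delta)>0$. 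Proposition~\ref{prop:Va-key} then only yields $\|\Gamma^I\phi\|_{L^2}\lesssim (C_1\eps)^2 t^{1/2+O(\delta)}$, which grows and cannot be inserted into $\Ecal_1$ uniformly in $m$.

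The paper handles this by exploiting the divergence structure rather than ignoring it: following Katayama, one writes $\phi=\partial_\alpha\Phi^\alpha+\Phi^5$, where $-\Box\Phi^\alpha+m^2\Phi^\alpha=P^\alpha v^2$ (undifferentiated source) with zero data and $-\Box\Phi^5+m^2\Phi^5=M_1 v^3$ carries the data. Then $\|\Gamma^I\partial_\alpha\Phi^\alpha\|$ is a \emph{derivative} and is controlled, uniformly in $m$, by the energy $\Ecal_m(t,\Gamma^{I'}\Phi^\alpha)^{1/2}$ via a standard energy estimate with the good source $P^\alpha v^2$. Only $\Phi^5$, whose source is genuinely cubic, is fed into Proposition~\ref{prop:Va-key}, and there one does obtain $q<0$. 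Without this decomposition the $\Ecal_1$ bound for $\phi$ does not close.
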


On one hand, Proposition \ref{prop:Va-contraction0} ensures that $T(X) \subset X$. On the other hand, the proof of Proposition \ref{prop:Va-contraction0} can also be adapted to prove the solution map $T$ is a contraction map. A combination of these two allows us to rely on the fixed point theorem to prove the existence of global solutions to \eqref{eq:Va-model}.


\subsection{Proof of Proposition \ref{prop:Va-contraction0}}

In order to prove Proposition \ref{prop:Va-contraction0}, we need to rely on a few lemmas and propositions given below.

\begin{lemma}\label{lem:Va-decay1}
Let $(u, v) \in X$, then it holds for all $\Gamma \in A = \{ L_a, \del_\alpha, \Omega_{ab}\}$ that
\be 
\aligned
m t^{3/2} |\Gamma^K u| + t^{1/2} s |\del \Gamma^K u|
&\leq C C_1 \eps,
\qquad
|K| \leq N-7,
\\
t^{3/2} s^{-\delta} |\Gamma^K v| + t^{1/2} s^{1-\delta} |\del \Gamma^K v|
&\leq C C_1 \eps,
\qquad
|K| \leq N-7,
\endaligned
\ee
as well as
\be 
\aligned
t |\Gamma^J u| + t |\del \Gamma^J u| + t^{1 - \delta} |\Gamma^J v|
&\leq C C_1 \eps,
\qquad
|J| \leq N-5,
\endaligned
\ee
\end{lemma}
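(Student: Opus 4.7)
The plan is to extract pointwise bounds from the two families of $L^2$-type controls packaged in the $X$-norm, using Klainerman-Sobolev on flat slices for the $|J|\le N-5$ bounds and the hyperboloidal Sobolev inequality (Proposition \ref{prop:sobolev}) for the $|K|\le N-7$ bounds. Since $\|(u,v)\|_X \le C_1\eps$, we immediately read off, for $|I|\le N$ and $\Gamma\in A$,
\[
\|\del \Gamma^I u\|_{L^2(\RR^3)} + \|\Gamma^I u\|_{L^2(\RR^3)} \lesssim C_1\eps,
\qquad
\|\del \Gamma^I v\|_{L^2(\RR^3)} + \|\Gamma^I v\|_{L^2(\RR^3)} \lesssim C_1\eps\, t^{\delta/2},
\]
and, for $|J|\le N-5$ on the hyperboloids,
\[
\|(s/t)\del \Gamma^J u\|_{L^2_f(\Hcal_s)} + \|\bdel_a \Gamma^J u\|_{L^2_f(\Hcal_s)} + m\|\Gamma^J u\|_{L^2_f(\Hcal_s)} \lesssim C_1\eps,
\]
\[
\|(s/t)\del \Gamma^J v\|_{L^2_f(\Hcal_s)} + \|\bdel_a \Gamma^J v\|_{L^2_f(\Hcal_s)} + \|\Gamma^J v\|_{L^2_f(\Hcal_s)} \lesssim C_1\eps\, s^{\delta}.
\]

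First I would establish the flat-foliation bounds on $(u,v)$ for $|J|\le N-5$. Applying \eqref{eq:Va-K-S} to $\Gamma^J u$ and to $\del \Gamma^J u$, and commuting $\Gamma^I$ past $\del$ using \eqref{eq:Va-commutator} (which exchanges $\Gamma\del$ and $\del\Gamma$ modulo lower-order $\del$-terms), I obtain $|\Gamma^J u| + |\del \Gamma^J u| \lesssim t^{-1} \sum_{|I|\le 3}\|\Gamma^I \del \Gamma^J u\|_{L^2}\lesssim C_1 \eps\, t^{-1}$, provided $|J|+3+1 \le N$, which is satisfied since $|J|\le N-5$. The analogous estimate on $v$ gives $|\Gamma^J v|\lesssim C_1\eps\, t^{-1+\delta/2}$, and multiplying by $t^{1-\delta}$ absorbs the excess power of $t^{\delta/2}$ for $t\ge 2$.

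Next I would derive the hyperboloidal pointwise bounds. For the Klein-Gordon factor in $u$, I apply \eqref{eq:Sobolev2} to $m\Gamma^K u$, trading two $L$-derivatives for the pointwise bound, and using $m\|L^J\Gamma^K u\|_{L^2_f(\Hcal_s)}\le E_m(s,L^J\Gamma^K u)^{1/2}\lesssim C_1\eps$ for $|J|\le 2$ and $|K|\le N-7$ (so that $|J|+|K|\le N-5$). This yields $m t^{3/2}|\Gamma^K u|\lesssim C_1\eps$. For the derivative bound $t^{1/2}s|\del \Gamma^K u|$ I invoke the weighted Sobolev inequality $\sup_{\Hcal_s}|st^{1/2}\phi|\lesssim \sum_{|J|\le 2}\|(s/t)L^J\phi\|_{L^2_f}$ with $\phi=\del \Gamma^K u$, commute $L^J$ past $\del$ via \eqref{eq:Va-commutator}, and use $\|(s/t)\del L^{J'}\Gamma^K u\|_{L^2_f(\Hcal_s)}\lesssim E_m(s,L^{J'}\Gamma^K u)^{1/2}\lesssim C_1\eps$, again within the range $|J'|+|K|\le N-5$. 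The bounds on $v$ are identical in form, with the only difference that the $L^2_f$ norms carry an extra $s^{\delta}$, which is exactly the factor appearing on the left-hand side.

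The only nontrivial bookkeeping is making sure the commutators $[L^J,\del]$ and $[\Gamma^I,\del]$ produce only lower-order terms of the same type (which is guaranteed by \eqref{eq:Va-commutator}), and that all the indices remain within the range for which the $X$-norm supplies control: two extra $L$ derivatives for the hyperboloidal Sobolev and three extra $\Gamma$ derivatives for Klainerman-Sobolev. The thresholds $N-5$ and $N-7$ are chosen precisely so that $(N-5)+2\le N-3$ and $(N-7)+2+1\le N-4\le N-5$, leaving plenty of room. Since no nonlinear structure enters, there is no genuine obstacle; the argument is a direct reading of the $X$-norm through the two Sobolev-type inequalities.
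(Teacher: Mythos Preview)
Your proposal is correct and follows precisely the approach indicated in the paper: the hyperboloidal Sobolev inequality of Proposition~\ref{prop:sobolev} (and its weighted variant) together with the commutator relations~\eqref{eq:Va-commutator} yield the $|K|\le N-7$ bounds, while the Klainerman--Sobolev inequality~\eqref{eq:Va-K-S} on flat slices, again combined with the commutators, gives the $|J|\le N-5$ bounds. Your write-up is in fact more explicit than the paper's, which merely cites these two ingredients; the only blemish is the index bookkeeping in your final paragraph, where the inequalities ``$(N-5)+2\le N-3$'' and ``$(N-7)+2+1\le N-4$'' are stated somewhat confusingly---the actual constraints are $(N-7)+2\le N-5$ for the hyperboloidal Sobolev step and $(N-5)+3\le N$ (or $+4$, if you count the extra $\del$) for the flat Klainerman--Sobolev step, both of which are satisfied.
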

\begin{proof}
The proof of the first two estimates follows from the Sobolev-type inequality on the hyperboloids in Proposition \ref{prop:sobolev}, as well as the commutator estimates.

For the last estimate,  it follows from the Klainerman-Sobolev inequality \eqref{eq:Va-K-S} and the commutator estimates.
\end{proof}

\begin{proposition}[Energy estimates on the flat slices]
\label{prop:Va-EEflat}
Assume $(\phi, \psi) = T(u, v)$ with $(u, v) \in X$, then for all $\Gamma \in A$ we have
\bel{eq:Va-EEflat} 
\aligned
\Ecal_1 (t, \Gamma^I \phi)^{1/2}
&\leq C \eps + C (C_1 \eps)^2,
\qquad
|I| \leq N,
\\
\Ecal_1 (t, \Gamma^I \psi)^{1/2}
&\leq C \eps + C (C_1 \eps)^2 t^\delta,
\qquad
|I| \leq N.
\endaligned
\ee
\end{proposition}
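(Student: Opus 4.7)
The plan is to treat $\psi$ and $\phi$ separately: the fixed mass $1$ of $\psi$ admits a routine flat-slice energy argument, while the variable mass $m\in[0,1]$ of $\phi$ turns the uniform $L^2$-control on $\phi$ into the delicate step, addressed by Proposition \ref{prop:Va-key}. For $\psi$, I use $[\Gamma,-\Box+1]=0$ from \eqref{eq:Va-commutator} to commute $\Gamma^I$ through and apply the standard Klein--Gordon energy identity, giving
$$\mathcal{E}_1(t,\Gamma^I\psi)^{1/2}\le \mathcal{E}_1(t_0,\Gamma^I\psi)^{1/2}+C\int_{t_0}^{t}\|\Gamma^I F_\psi\|_{L^2}\,d\tau$$
with $F_\psi=N_1(\partial_t u)^2+N_2 u^3+N_3 uv$; the data term is $\lesssim\epsilon$ by \eqref{eq:3Dmasssmall}. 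I distribute $\Gamma^I$ by Leibniz and, since $N\geq 14$, place the top-order factor in $L^2$ (via the $X$-norm) and the remaining factors in $L^\infty$ (via Lemma \ref{lem:Va-decay1}). The sources $u^3$ and $(\partial u)^2$ decay at least like $t^{-2}$ in $L^2$ and integrate freely, while the borderline $uv$-term yields an integrand $\lesssim(C_1\epsilon)^2 t^{-1+\delta}$, integrating to $O((C_1\epsilon)^2 t^{\delta})$, matching the claimed growth.

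For $\phi$, the natural equation energy is $\mathcal{E}_m$, which degenerates in the $L^2$-direction as $m\to 0$, so I split
$$\mathcal{E}_1(t,\Gamma^I\phi)=\mathcal{E}_0(t,\Gamma^I\phi)+\|\Gamma^I\phi\|_{L^2}^{2}$$
and treat the two pieces separately. The gradient part $\mathcal{E}_0^{1/2}\le \mathcal{E}_m^{1/2}$ is bounded via the standard energy identity applied to $(-\Box+m^2)(\Gamma^I\phi)=\Gamma^I(M_1v^3+P^\alpha\partial_\alpha(v^2))$. The cubic source $v^3$ is integrable in $L^2$ (decay $t^{-3+O(\delta)}$). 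For the divergence-form source I integrate by parts inside the energy identity, moving one derivative onto $\partial_t\Gamma^I\phi$; using the sharp hyperboloidal decay $|v|\lesssim(C_1\epsilon)t^{-3/2+\delta}$ from Lemma \ref{lem:Va-decay1}, the resulting integrand is $\|\partial\Gamma^I\phi\|_{L^2}\cdot\|\partial_t\Gamma^I(v^2)\|_{L^2}\lesssim \mathcal{E}_0^{1/2}\cdot(C_1\epsilon)^2 t^{-3/2+O(\delta)}$, closed by a Gr\"onwall-type argument to produce $\mathcal{E}_0(t,\Gamma^I\phi)^{1/2}\le C\epsilon+C(C_1\epsilon)^2$ uniformly in $m$ and $t$.

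The uniform $L^2$ bound on $\Gamma^I\phi$ is the truly delicate step, where Proposition \ref{prop:Va-key} is essential since the possibly vanishing mass provides no control. I apply it to $\Gamma^I\phi$ with source $f=\Gamma^I F_\phi$ and data $(\Gamma^I\phi_0,\Gamma^I\phi_1)$. Finite propagation speed gives $|x|\lesssim t$ on the support of every iterate, so the weighted data norms $\|u_0\|_{L^2}+\|xu_1\|_{L^2}$ are finite. The main task is to verify the hypothesis $\|xf\|_{L^2}\lesssim t^{-1+q}$ with $q<0$: using the sharp \emph{interior} decay $|v|\lesssim(C_1\epsilon)t^{-3/2+\delta}$ (hyperboloidal Sobolev in Lemma \ref{lem:Va-decay1}) together with the free factor $|x|\le t$ gives $\|xf\|_{L^2}\lesssim(C_1\epsilon)^2 t^{-1+q}$ with $q<0$ for $\delta$ small. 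Proposition \ref{prop:Va-key} then delivers $\|\Gamma^I\phi\|_{L^2}\lesssim\epsilon+(C_1\epsilon)^2$ uniformly, and combining with the $\mathcal{E}_0$ bound completes the proof. The main obstacle is precisely this last step: the weight $|x|\le t$ eats one power of $t$ for free, so the flat-slice Klainerman--Sobolev bound $|v|\lesssim t^{-1+\delta}$ alone would give only a \emph{growing} $\|xf\|_{L^2}$; it is only the hyperboloidal $t^{-3/2+\delta}$ decay that pushes $\|xf\|_{L^2}$ strictly below $t^{-1}$, which is exactly why the $X$-norm carries both foliations simultaneously, and careful $\delta$-bookkeeping is required to keep $q<0$ throughout.
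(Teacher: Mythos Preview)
Your treatment of $\psi$ is fine and matches the paper. The gap is in the $L^2$ bound on $\Gamma^I\phi$. You claim that Proposition~\ref{prop:Va-key} applies to $(-\Box+m^2)(\Gamma^I\phi)=\Gamma^I F_\phi$ with $q<0$, but this fails for the divergence-form piece $P^\alpha\partial_\alpha(v^2)$. After Leibniz, a typical term is $\partial\Gamma^{I_1}v\cdot\Gamma^{I_2}v$; placing the high-order factor in $L^2$ and using $|x|\le t$, the best you get from the $X$-norm and Lemma~\ref{lem:Va-decay1} is
\[
\big\|x\,\partial\Gamma^{I_1}v\cdot\Gamma^{I_2}v\big\|_{L^2}
\lesssim t\cdot (C_1\eps)\,t^{\delta}\cdot (C_1\eps)\,t^{-3/2+\delta}
=(C_1\eps)^2\,t^{-1/2+2\delta},
\]
so $q=\tfrac12+2\delta>0$, and Proposition~\ref{prop:Va-key} yields a bound growing like $t^{1/2}$, not a uniform one. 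The sharp $t^{-3/2+\delta}$ decay of $v$ is not enough here because the source is only \emph{quadratic}; the extra $|x|$ weight eats the half-power you would need.

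The paper's device for this step is a Katayama-type splitting: one writes $\phi=\partial_\alpha\Phi^\alpha+\Phi^5$ with
\[
(-\Box+m^2)\Phi^\alpha=P^\alpha v^2,\qquad (-\Box+m^2)\Phi^5=M_1 v^3,
\]
and matched initial data. Then $\|\Gamma^I\partial_\alpha\Phi^\alpha\|_{L^2}$ is controlled by $\mathcal{E}_m(t,\Gamma^{I'}\Phi^\alpha)^{1/2}$ via a plain energy estimate (the undifferentiated source $v^2$ is already integrable in time), while Proposition~\ref{prop:Va-key} is applied only to $\Phi^5$, whose \emph{cubic} source satisfies $\|x\,\Gamma^I(v^3)\|_{L^2}\lesssim (C_1\eps)^3 t^{-2+O(\delta)}$, genuinely giving $q<0$. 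This decomposition is the missing idea.

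A smaller point: your integration-by-parts argument for the $\mathcal{E}_0$ part is not needed and, as written, loses a derivative (moving $\partial_a$ onto $\partial_t\Gamma^I\phi$ produces $\partial^2\Gamma^I\phi$, uncontrolled at top order $|I|=N$). The paper simply estimates $\|\Gamma^I\partial_\alpha(v^2)\|_{L^2}\lesssim \|\partial\Gamma^{I_1}v\|_{L^2}\,\|\Gamma^{I_2}v\|_{L^\infty}\lesssim (C_1\eps)^2 t^{-3/2+2\delta}$ directly, which is integrable without any manipulation.
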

\begin{proof}
First, by the energy estimates it holds for all $|I| \leq N$ that
$$
\aligned
\Ecal_m (t, \Gamma^I \phi)^{1/2}
&\leq 
\Ecal_m (2, \Gamma^I \phi)^{1/2}
+ \int_2^t \big\| \Gamma^I \big(M_1 v^3 + P^\alpha \del_\alpha ( v^2)\big) \big\|(t') \, dt'
\\
&\leq 
\Ecal_m (2, \Gamma^I \phi)^{1/2}
+ C \sum_{I_1 + I_2 = I, |I_1| \geq |I_2|, \alpha} \int_2^t \big\| \Gamma^{I_1} \del_\alpha v \big\|  \big\|\Gamma^{I_2} v \big\|_{L^\infty} \, dt'.
\endaligned
$$
We insert the estimates of $v$ and arrive at
$$
\aligned
\Ecal_m (t, \Gamma^I \phi)^{1/2}
&\leq 
\Ecal_m (2, \Gamma^I \phi)^{1/2}
+ C (C_1 \eps)^2 \int_2^t t'^\delta t'^{-3/2 + \delta}
\\
&\leq
\eps + C (C_1 \eps)^2.
\endaligned
$$

Next, we need to estimate the $L^2$ norm of $\phi$ component. Following \cite{Katayama12a} and for all $|I| \leq N$, we observe that
$$
\aligned
\| \Gamma^I \phi \|
\leq 
\| \Gamma^I \del_\alpha \Phi^\alpha \| 
+
\| \Gamma^I \Phi^5 \|, 
\endaligned
$$
in which $\Phi^\alpha, \Phi^5$ are solutions to the following equations:
\be 
\aligned
- \Box \Phi^\alpha + m^2 \Phi^\alpha
&=
P^\alpha v^2,
\\
\big( \Phi^\alpha, \del_t \Phi^\alpha \big)(2, \cdot)
&= (0, 0),
\endaligned
\ee

\be 
\aligned
- \Box \Phi^5 + m^2 \Phi^5
&=
M_1 v^3,
\\
\big( \Phi^5, \del_t \Phi^5 \big)(2, \cdot)
&= (u_0, u_1 - P^0 v_0^2).
\endaligned
\ee
We notice that
$$
\| \Gamma^I \del_\alpha \Phi^\alpha \| 
\leq C \sum_{|I_1| \leq N} \| \del_\alpha \Gamma^{I_1} \Phi^\alpha \| 
\leq C\eps + C (C_1 \eps)^2, 
$$
which follows from an energy estimate, and
$$
\| \Gamma^I \Phi^5 \|
\leq C \eps + C (C_1 \eps)^2,
$$
which is thanks to the estimate
$$
\big\| \Gamma^I (v^3) \big\|
\leq C (C_1 \eps)^3 t^{-5/2},
$$
and Proposition \ref{prop:Va-key}. Hence together with what we have proved for $\Ecal_m(t, \Gamma^I \phi)^{1/2}$, the first estimate in \eqref{eq:Va-EEflat} is now obtained.

Finally we turn to the estimates of $\psi$ component. For $|I| \leq N$, the energy estimates give us 
$$
\aligned
\Ecal_1 (t, \Gamma^I \psi)^{1/2}
&\leq 
\Ecal_1 (2, \Gamma^I \psi)^{1/2}
+ 
\int_2^t \big\| \Gamma^I \big(N_1 (\del_t u)^2 + N_2 u^3 + N_3 u v \big) \big\|(t') \, dt'
\\
&\leq 
\Ecal_1 (2, \Gamma^I \psi)^{1/2}
+ 
C \sum_{I_1 + I_2 = I, |I_1| \leq |I_2|} \int_2^t \Big(\big\| \Gamma^{I_1} \del_t u \big\|_{L^\infty} \big\| \Gamma^{I_2} \del_t u \big\| 
+ \big\| \Gamma^{I_1} u^2 \big\|_{L^\infty} \big\| \Gamma^{I_2} u \big\| 
\\
&\hskip4.5cm
+ \big\|\Gamma^{I_1} v \big\|_{L^\infty} \big\| \Gamma^{I_2} u \big\| 
+ \big\|\Gamma^{I_1} u \big\|_{L^\infty} \big\| \Gamma^{I_2} v \big\|  \Big) \, dt'.
\endaligned
$$
Since $(u, v) \in X$, we thus have
$$
\aligned
\Ecal_1 (t, \Gamma^J \psi)^{1/2}
&\leq 
\Ecal_1 (2, \Gamma^J \psi)^{1/2}
+ 
C (C_1 \eps)^2 \int_2^t t'^{-1+\delta} \, dt'
\\
&\leq 
\eps + C (C_1 \eps)^2 t^\delta.
\endaligned
$$
\end{proof}

By the local estimates of the solution $(\phi, \psi)$, we have the following bounds of its hyperboloidal energy on the initial slice.

\begin{lemma}
Let $(\phi, \psi) = T(u, v)$ with $(u, v) \in X$, then for all $|J| \leq N-5$ and $\Gamma \in A$ it holds true that
\be 
E_m (2, \Gamma^J \phi)^{1/2}
+
E_1 (2, \Gamma^J \psi)^{1/2}
\leq C \eps + C (C_1 \eps)^2.
\ee
\end{lemma}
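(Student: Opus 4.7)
The plan is to exploit finite speed of propagation together with the flat-slice energy estimates from Proposition \ref{prop:Va-EEflat}. Since the initial data for $(\phi, \psi)$ are prescribed at $t = 2$ and supported in $B(0, 1)$, the standard domain of dependence argument for linear Klein-Gordon equations gives that $(\phi, \psi)(t, \cdot)$ is supported in $\{|x| \leq t - 1\}$. On the initial hyperboloid $\Hcal_2 = \{t^2 - r^2 = 4\}$, the constraint $r \leq t - 1$ together with $t = \sqrt{4 + r^2}$ forces $r \leq 3/2$ and $t \leq 5/2$. Hence $\Hcal_2 \cap \mathrm{supp}(\phi, \psi)$ lies in the compact spacetime slab $[2, 5/2] \times B(0, 3/2)$, on which the conversion factors $s/t = 2/t \in [4/5, 1]$ and $|\underdel_a \phi|^2 \leq 2|\del_t \phi|^2 + 2|\del_a \phi|^2$ are uniformly bounded. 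Using the second form of \eqref{eq:2energy2}, it therefore suffices to control
$$ \int_{|x| \leq 3/2} \Big( \sum_\alpha |\del_\alpha \Gamma^J \phi|^2 + |\Gamma^J \phi|^2 \Big)\Big|_{t = \sqrt{4+|x|^2}} \, dx, $$
and the analogous integral for $\psi$.

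Next, for each fixed $x \in B(0, 3/2)$, I would apply the fundamental theorem of calculus in $t$ between $t = 2$ and $t = \sqrt{4+|x|^2} \leq 5/2$, yielding the pointwise bound
$$ |\del_\alpha \Gamma^J \phi|^2\big(\sqrt{4+|x|^2}, x\big) \leq |\del_\alpha \Gamma^J \phi|^2(2, x) + \int_2^{5/2} \big( |\del_t \del_\alpha \Gamma^J \phi|^2 + |\del_\alpha \Gamma^J \phi|^2 \big)(t, x) \, dt, $$
and similarly for the undifferentiated term. Integrating in $x$ over $B(0, 3/2)$, the boundary piece at $t = 2$ is controlled by $\|(u_0, u_1)\|_{H^{N+1} \times H^N}^2 \lesssim \eps^2$ thanks to \eqref{eq:3Dmasssmall}. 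The integral piece is bounded by $\int_2^{5/2} \Ecal_1(t, \del_\alpha \Gamma^J \phi) \, dt$, and since $|J| + 1 \leq N - 4 \leq N$, Proposition \ref{prop:Va-EEflat} produces the bound $C \eps^2 + C (C_1 \eps)^4$ on this integral. Taking square roots and collecting terms gives $E_m(2, \Gamma^J \phi)^{1/2} \leq C \eps + C (C_1 \eps)^2$; the $\psi$ case is identical, with the factor $t^\delta$ in the second estimate of \eqref{eq:Va-EEflat} absorbed harmlessly on the compact interval $[2, 5/2]$.

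The main obstacle is essentially only careful bookkeeping: one must verify that the derivative count $|J| \leq N - 5$ needed on the hyperboloid, after the one extra derivative lost in the fundamental theorem of calculus step, is still covered by the order $|I| \leq N$ available from Proposition \ref{prop:Va-EEflat}, and that all the geometric conversion factors stay bounded on $[2, 5/2] \times B(0, 3/2)$. No genuine analytic difficulty is present, since we are simply comparing two foliations of the same small compact spacetime slab on which both energy densities are manifestly equivalent.
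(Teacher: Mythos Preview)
Your approach is correct and is precisely the fleshing-out of what the paper means by ``by the local estimates of the solution $(\phi,\psi)$'': the paper gives no proof here beyond that one-line remark, and your argument via finite propagation speed, the geometric bounds $r\le 3/2$, $t\le 5/2$ on $\Hcal_2\cap\Kcal$, and comparison of the hyperboloidal energy with flat energies on the slab $[2,5/2]\times B(0,3/2)$ is exactly the intended mechanism.

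One small correction: the boundary piece at $t=2$ is \emph{not} controlled purely by $\|(u_0,u_1)\|_{H^{N+1}\times H^N}$. For $|J|\ge 2$ the quantity $\Gamma^J\phi(2,\cdot)$ already involves $\del_t^2\phi(2,\cdot)$, and via the equation $-\Box\phi+m^2\phi=f$ this brings in the source $f=M_1 v^3+P^\alpha\del_\alpha(v^2)$ and its time derivatives at $t=2$. Since $v$ is a generic element of $X$ (not a solution to any PDE), $\del_t^k v(2,\cdot)$ for $k\ge 2$ is not determined by $(v_0,v_1)$; it is controlled only through the $X$-norm, contributing a term of size $(C_1\eps)^2$. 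The cleanest fix is simply to bound the boundary piece by $\Ecal_1(2,\Gamma^J\phi)$ and invoke Proposition~\ref{prop:Va-EEflat} at $t=2$ directly, which already yields $(C\eps+C(C_1\eps)^2)^2$. This does not affect your conclusion, since the target bound is $C\eps+C(C_1\eps)^2$ anyway.
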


\begin{proposition}[Energy estimates on the hyperboloids]
\label{prop:Va-EEhyper}
Assume $(\phi, \psi) = T(u, v)$ with $(u, v) \in X$, then we have for all $\Gamma \in A$ that
\be
\aligned
E_m (s, \Gamma^J \phi)^{1/2}
&\leq C \eps + C (C_1 \eps)^2,
\qquad
|J| \leq N-5,
\\
E_1 (s, \Gamma^J \psi)^{1/2}
&\leq C \eps + C (C_1 \eps)^2 s^\delta,
\qquad
|J| \leq N-5.
\endaligned
\ee
\end{proposition}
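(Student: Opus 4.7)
The plan is to apply the hyperboloidal energy inequality \eqref{eq:w-EE} to $\Gamma^J\phi$ and $\Gamma^J\psi$ for $|J|\leq N-5$, using the commutator identities \eqref{eq:Va-commutator}. The initial-slice contribution $E_m(2,\Gamma^J\phi)^{1/2}+E_1(2,\Gamma^J\psi)^{1/2}\leq C\eps+C(C_1\eps)^2$ is supplied by the preceding lemma, so it suffices to bound $\|\Gamma^J(\text{source})\|_{L^2_f(\Hcal_{s'})}$ for each nonlinearity and integrate on $[2,s]$.

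For the $\phi$-equation I distribute $\Gamma^J$ across $M_1 v^3$ and $P^\alpha\del_\alpha(v^2)=2P^\alpha v\del_\alpha v$ by Leibniz, and in each product place the top-order factor in $L^2_f(\Hcal_s)$ using the $X$-bound $E_1(s,\Gamma^{J_i}v)^{1/2}\leq C_1\eps s^\delta$, while the remaining factors go in $L^\infty$ via Lemma \ref{lem:Va-decay1}. This splitting is admissible because for $N\geq 14$ all low-order subindices satisfy $|J_i|\leq (N-5)/2\leq N-7$. When a derivative hits the top-order factor, the $L^2_f$ norm carries an $(s/t)$ weight, compensated by a matching $(t/s)$ weight on the $L^\infty$ factor, the latter being controlled via $t\geq s$ on $\Hcal_s$. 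The resulting source bound is at worst $C(C_1\eps)^2 s^{-3/2+2\delta}$ (the cubic $v^3$ contribution even being $C(C_1\eps)^3 s^{-3+3\delta}$), whose $s'$-integral on $[2,s]$ is $O(1)$; this gives the first line of the statement.

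For the $\psi$-equation the sources $(\del_t u)^2$, $u^3$, $uv$ are treated analogously, but a genuine new difficulty arises: since $m\in[0,1]$ may vanish, $E_m(s,\Gamma^J u)^{1/2}$ does not directly control $\|\Gamma^J u\|_{L^2_f(\Hcal_s)}$. I resolve this by invoking the Hardy inequality on hyperboloids, which yields $\|r^{-1}\Gamma^J u\|_{L^2_f(\Hcal_s)}\lesssim E_m(s,\Gamma^J u)^{1/2}\leq C_1\eps$, \emph{uniformly} in $m$; combined with $t^{-1}\leq r^{-1}$ on the support $r\leq t-1$ together with $t\geq s$ on $\Hcal_s$, this produces weighted bounds of the form $\|t^{-k}\Gamma^J u\|_{L^2_f(\Hcal_s)}\lesssim C_1\eps\,s^{-(k-1)}$ for $k\geq 1$. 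Low-order factors are controlled pointwise by the $m$-independent Klainerman--Sobolev bound $|\Gamma^K u|\lesssim C_1\eps t^{-1}$ from Lemma \ref{lem:Va-decay1}, and for $(\del u)^2$ one additionally uses $t\leq s^2$ on $\Hcal_s\cap\Kcal$ to absorb a spare $t/s$ arising from the $(s/t)$-weighted energy. Each source then is at worst $C(C_1\eps)^2 s^{-1}$ on $\Hcal_s$, and integration in $s'$ gives the $Cs^\delta$ growth announced for $\psi$.

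The main obstacle throughout is precisely this $m$-uniform $L^2_f(\Hcal_s)$ control of the $u$-component when the mass may vanish; the combination of the hyperboloidal Hardy inequality and the Klainerman--Sobolev pointwise decay, both of which are independent of $m$, is the ingredient that makes the argument work uniformly in $m\in[0,1]$. All remaining steps are standard weight-counting on hyperboloids and routine consequences of $(u,v)\in X$.
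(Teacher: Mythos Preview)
Your treatment of the $\phi$-equation and of the $(\del_t u)^2$ and $u^3$ sources in the $\psi$-equation matches the paper's and is correct. The gap is in the $uv$ term. Your narrative places the top-order factor in $L^2_f(\Hcal_s)$ and the low-order one in $L^\infty$, invoking the hyperboloidal Hardy inequality whenever the top derivatives land on $u$. But for $\Gamma^{J_2}u\cdot\Gamma^{J_1}v$ with $|J_2|\geq|J_1|$ this yields only
\[
\big\|r^{-1}\Gamma^{J_2}u\big\|_{L^2_f(\Hcal_{s'})}\,\big\|r\,\Gamma^{J_1}v\big\|_{L^\infty(\Hcal_{s'})}
\lesssim C_1\eps\cdot C_1\eps\,s'^{-1/2+\delta},
\]
since $r|\Gamma^{J_1}v|\leq t\cdot C_1\eps\,t^{-3/2}s'^{\delta}\leq C_1\eps\,s'^{-1/2+\delta}$ on $\Hcal_{s'}$ (the worst case being $t\sim s'$ near the cone). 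Integrating $s'^{-1/2+\delta}$ gives growth $s^{1/2+\delta}$, not $s^\delta$; no choice of the power $k$ in your weighted Hardy bound $\|t^{-k}\Gamma^J u\|_{L^2_f}\lesssim C_1\eps\,s^{-(k-1)}$ repairs this, because the loss always reappears from the matching $t^{k}$ weight on $v$.

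The paper avoids this by \emph{not} putting the top-order $u$ factor in $L^2_f$ at all for the $uv$ term: it always places $u$ in $L^\infty$ and $v$ in $L^2_f$, regardless of which factor carries more derivatives. This is legitimate precisely because the Klainerman--Sobolev bound $t\,|\Gamma^{J}u|\lesssim C_1\eps$ in Lemma~\ref{lem:Va-decay1} holds for the \emph{full} range $|J|\leq N-5$ (it uses the flat energies up to order $N$ in the $X$-norm), not only for low orders. One then gets $\|\Gamma^{J_2}u\|_{L^\infty(\Hcal_{s'})}\,\|\Gamma^{J_1}v\|_{L^2_f(\Hcal_{s'})}\lesssim (C_1\eps)^2 s'^{-1+\delta}$, which integrates to $s^\delta$ as required. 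So the missing idea is that the $m$-independent pointwise decay of $u$ is available at top order, and for $uv$ one must exploit it there rather than fall back on Hardy.
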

\begin{proof}
From the energy estimates, we have for all $|J| \leq N-5$
$$
\aligned
E_m (s, \Gamma^J \phi)^{1/2}
&\leq
E_m (2, \Gamma^J \phi)^{1/2}
+
\int_2^s \big\| \Gamma^J \big(M_1 v^3 + P^\alpha \del_\alpha ( v^2)\big) \big\|_{L^2_f(\Hcal_{s'})} \, ds'
\\
&\leq 
E_m (2, \Gamma^J \phi)^{1/2}
+ C \sum_{J_1 + J_2 = J, |J_1| \geq |J_2|, \alpha} \int_2^s \Big(  \big\| \Gamma^{J_1} v \big\|_{L^2_f(\Hcal_{s'})}  \big\| \Gamma^{J_2} \del_\alpha v \big\|_{L^\infty(\Hcal_{s'})} 
\\
&\hskip3cm
+ \big\| (s'/t) \Gamma^{J_1} \del_\alpha v \big\|_{L^2_f(\Hcal_{s'})}  \big\|(t/s') \Gamma^{J_2} v \big\|_{L^\infty(\Hcal_{s'})} \Big) \, ds'.
\endaligned
$$
Successively, we get
$$
\aligned
E_m (s, \Gamma^J \phi)^{1/2}
&\leq
E_m (2, \Gamma^J \phi)^{1/2}
+
C (C_1 \eps)^2 \int_2^s s'^{-3/2 + 3\delta} \, ds'
\\
&\leq
C \eps + C (C_1 \eps)^2.
\endaligned
$$

In the process of estimating $\| \Gamma^J (u v) \|_{L^2_f(\Hcal_s)}$, we always take $L^2$ norm of $v$ and take $L^\infty$ norm of $u$. To be more precise, we have
$$
\aligned
E_1 (s, \Gamma^J \psi)^{1/2}
&\leq
E_1 (2, \Gamma^J \psi)^{1/2}
+
\int_2^s \big\| \Gamma^J \big(N_1 (\del_t u)^2 + N_2 u^3 + N_3 u v \big) \big\|_{L^2_f(\Hcal_{s'})} \, ds'
\\
&\leq 
E_1 (2, \Gamma^J \psi)^{1/2}
+ 
C \sum_{J_1 + J_2 = I, |J_1| \leq |J_2|} \int_2^s \Big(\big\| (t/s') \Gamma^{J_1} \del_t u \big\|_{L^\infty(\Hcal_{s'})} \big\| (s'/t) \Gamma^{J_2} \del_t u \big\|_{L^2_f(\Hcal_{s'})} 
\\
&\hskip3cm
+ \big\| r \Gamma^{J_1} u^2 \big\|_{L^\infty(\Hcal_{s'})} \big\| r^{-1} \Gamma^{J_2} u \big\|_{L^2_f(\Hcal_{s'})} 
+ \big\|\Gamma^{J_1} u \big\|_{L^\infty(\Hcal_{s'})} \big\| \Gamma^{J_2} v \big\|_{L^2_f(\Hcal_{s'})} 
\\
&\hskip3cm
+ \big\|\Gamma^{J_2} u \big\|_{L^\infty(\Hcal_{s'})} \big\| \Gamma^{J_1} v \big\|_{L^2_f(\Hcal_{s'})} \Big) \, ds',
\endaligned
$$
which leads us to 
$$
\aligned
E_1 (s, \Gamma^J \psi)^{1/2}
&\leq
E_1 (2, \Gamma^J \psi)^{1/2}
+
C (C_1 \eps)^2 \int_2^s s'^{-1 + \delta} \, ds'
\\
&\leq
C \eps + C (C_1 \eps)^2 s^\delta.
\endaligned
$$
Thus the proof is complete.
\end{proof}

With the preparations above, we are ready to give the proof of Proposition \ref{prop:Va-contraction0}.

\begin{proof}[Proof of Proposition \ref{prop:Va-contraction0}]
We choose $C_1 > 0$ large enough such that $C_1^{1/4} \geq 2 C + 1$ for all generic constants $C$ appearing in the analysis, and choose $\eps > 0$ very small such that $C_1^2 \eps \leq \delta \ll 1/10$. Then by recalling Proposition \ref{prop:Va-EEflat} and Proposition \ref{prop:Va-EEhyper}, we easily get 
$$
\|T(u, v) \|_X 
\leq {1\over 2} \| (u, v) \|_X
\quad
for~all~(u, v) ~\in X,
$$
which proves Proposition \ref{prop:Va-contraction0}.

\end{proof}


\section{Proof of Theorem \ref{thm:Va-main}}\label{sec:proof}

\subsection{Proof of the contraction map}

The main goal in this section is to prove the following proposition.

\begin{proposition}
\label{prop:Va-contraction1}
Let $(u, v), (\tildeu, \tildev) \in X$, and $(\phi, \psi) = T(u, v), (\tildephi, \tildepsi) = T(\tildeu, \tildev)$. 
If we denote 
\be 
h 
=
\big\| (u- \tildeu, v- \tildev) \big\|_X,
\ee
then it is true that
\be 
\big\| (\phi - \tildephi, \psi - \tildepsi) \big\|_X
\leq {1\over 2} h.
\ee
\end{proposition}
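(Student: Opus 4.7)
The plan is to subtract the two linear systems defining $T(u,v)$ and $T(\tildeu,\tildev)$, and then re-run the arguments of Propositions~\ref{prop:Va-EEflat} and~\ref{prop:Va-EEhyper} on the differences $\Phi := \phi - \tildephi$ and $\Psi := \psi - \tildepsi$. By linearity of $T$ and coincidence of initial data, $(\Phi, \Psi)$ solves the same linear Klein-Gordon system with \emph{zero} data at $t=2$ and source equal to the difference of the nonlinearities. The relevant algebraic identities are
\begin{align*}
v^3 - \tildev^3 &= (v - \tildev)(v^2 + v\tildev + \tildev^2), \\
v^2 - \tildev^2 &= (v + \tildev)(v - \tildev), \\
(\del_t u)^2 - (\del_t \tildeu)^2 &= \del_t(u - \tildeu)(\del_t u + \del_t \tildeu), \\
u^3 - \tildeu^3 &= (u - \tildeu)(u^2 + u\tildeu + \tildeu^2), \\
uv - \tildeu\tildev &= (u - \tildeu) v + \tildeu (v - \tildev);
\end{align*}
after applying $\Gamma^I$ and Leibniz, every source contains exactly one \emph{difference} factor (bounded in the $X$-norm by $h$) and one or two \emph{background} factors (bounded by $C_1 \eps$, since $(u,v),(\tildeu,\tildev) \in X$).

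With this in hand, I would replay the flat-slice estimates of Proposition~\ref{prop:Va-EEflat} and the hyperboloidal estimates of Proposition~\ref{prop:Va-EEhyper} for $\Gamma^I \Phi$ and $\Gamma^I \Psi$. In each product I place the background factor in $L^\infty$ using Lemma~\ref{lem:Va-decay1} and the Sobolev/Klainerman-Sobolev inequalities (to pull out $C_1 \eps$ with the appropriate power of $t$ or $s$), and the difference factor in $L^2$ or $L^2_f$ using the $X$-norm of $(u-\tildeu,v-\tildev)$ (to pull out $h$). The resulting time integrals are identical to those in the original proofs, and therefore converge (or grow like $t^\delta$ for the $\Psi$-component, which is permitted by the $X$-norm). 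Each estimate thereby contributes a bound of the form $C\, C_1 \eps \cdot h$.

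The most delicate step, and the main obstacle I anticipate, is the $L^2$ bound on $\Gamma^I \Phi$ itself, required because $\Ecal_1$ in the flat part of the $X$-norm carries a mass-independent $L^2$ term that the energy loses when $m \to 0$. Following Proposition~\ref{prop:Va-EEflat}, I split the source into a divergence part and a cubic part and write $\Phi = \del_\alpha \Phi^\alpha + \Phi^5$, with $\Phi^\alpha, \Phi^5$ solving linear Klein-Gordon equations with zero initial data and sources $P^\alpha(v^2 - \tildev^2)$ and $M_1(v^3 - \tildev^3)$. The divergence piece is absorbed in the energy of $\Gamma^I \Phi^\alpha$ already bounded above; the cubic piece is handled by Proposition~\ref{prop:Va-key} upon verifying $\|x\Gamma^I(v^3 - \tildev^3)\|_{L^2} \lesssim t^{-1+q}$ with $q<0$. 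Placing two background factors of $v$ or $\tildev$ in $L^\infty$ with the hyperboloidal decay $\lesssim C_1\eps\, t^{-3/2+\delta}$ from Lemma~\ref{lem:Va-decay1}, and the difference factor in $L^2$ with the $X$-norm bound $\lesssim h\, t^\delta$, yields $\|x \Gamma^I(v^3 - \tildev^3)\|_{L^2} \lesssim (C_1\eps)^2 h\, t^{-2+3\delta}$, which indeed sits in the $q<0$ regime since $3\delta \ll 1$.

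Collecting all pieces gives $\|(\Phi, \Psi)\|_X \leq C\, C_1 \eps \cdot h$, and the calibration of $\eps$ already made in Proposition~\ref{prop:Va-contraction0} (namely $C_1^{1/4} \geq 2C+1$ and $C_1^2 \eps \leq \delta \ll 1/10$) forces $C\, C_1 \eps \leq 1/2$, delivering $\|(\Phi, \Psi)\|_X \leq h/2$ as required.
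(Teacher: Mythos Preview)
Your proposal is correct and follows essentially the same route as the paper: subtract the two systems, factor the nonlinearities so that each term carries one difference factor and one or two background factors, and then replay the flat and hyperboloidal energy estimates (including the Katayama splitting plus Proposition~\ref{prop:Va-key} for the $L^2$ bound on $\Gamma^I\Phi$) to obtain bounds of size $C\,C_1\eps\, h$. The only minor imprecision is your blanket statement that the difference factor always sits in $L^2$ and the background in $L^\infty$; in practice the placement is dictated by the derivative count after Leibniz, so sometimes the roles swap, but since both $(u-\tildeu,v-\tildev)$ and the backgrounds live in $X$ this symmetry costs nothing and the argument goes through unchanged.
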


Before we proceed, we first write the equations satisfied by the difference 
$$ 
(\chi_1, \chi_2)
:=
\big(\phi - \tildephi, \psi - \tildepsi \big),
$$
and we find
\bel{eq:Va-diff} 
\aligned
- \Box \chi_1 + m^2 \chi_1
&=
M_1 (v - \tildev) (v^2 + v \tildev + \tildev^2)
+ P^\alpha \del_\alpha \big( (v - \tildev) (v + \tildev) \big),
\\
- \Box \chi_2 + \chi_2
&=
N_1 (\del_t u - \del_t \tildeu) (\del_t u + \del_t \tildeu)
+ N_2 (u - \tildeu) (u^2 + u \tildeu + \tildeu^2)
\\
&
+ N_3 (v - \tildev) u + N_3 (u - \tildeu) \tildev,
\\
\big(\chi_1, \chi_2\big) (2, \cdot)
&= (0, 0),
\qquad
\big(\del_t \chi_1, \del_t \chi_2 \big)(2, \cdot)
= (0, 0).
\endaligned
\ee

We have several useful observations illustrated in the following lemmas. The one right below gives us the pointwise decay results of the difference variable $(u - \tildeu, v - \tildev)$.

\begin{lemma}
With the same notations and assumptions as Proposition \ref{prop:Va-contraction1}, we have the following pointwise estimates true for all $\Gamma \in A$
\be 
\aligned
m t^{3/2} \big|\Gamma^K (u - \tildeu) \big| + t^{1/2} s \big|\del \Gamma^K (u - \tildeu) \big|
&\leq C h,
\qquad
|K| \leq N-7,
\\
t^{3/2} s^{-\delta} \big|\Gamma^K (v - \tildev) \big| + t^{1/2} s^{1-\delta} \big|\del \Gamma^K (v - \tildev) \big|
&\leq C h,
\qquad
|K| \leq N-7,
\endaligned
\ee
as well as
\be 
\aligned
t \big|\Gamma^J (u - \tildeu) \big| + t \big|\del \Gamma^J (u - \tildeu) \big| + t^{1 - \delta} \big|\Gamma^J (v - \tildev) \big|
&\leq C h,
\qquad
|J| \leq N-5.
\endaligned
\ee
\end{lemma}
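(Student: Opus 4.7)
The proof mirrors that of Lemma~\ref{lem:Va-decay1} almost verbatim. The key observation is that by definition of the $X$-norm the difference satisfies
$$
\|(u-\tildeu,\, v-\tildev)\|_X = h,
$$
so every $L^2$-type bound on hyperboloidal or flat slices that is controlled by $C_1\eps$ for members of $X$ is controlled by $h$ for the difference. The task is then to convert these $L^2$-type bounds into pointwise bounds using the two Sobolev-type devices already in hand, exactly as in Lemma~\ref{lem:Va-decay1}.

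For the first two displayed estimates (hyperboloidal decay with $|K|\le N-7$), I would invoke the Sobolev-type inequality on hyperboloids, Proposition~\ref{prop:sobolev}. Applied to $\Gamma^K(u-\tildeu)$ it yields
$$
t^{3/2}\,|\Gamma^K(u-\tildeu)| \lesssim \sum_{|J|\le 2} \bigl\|L^J\Gamma^K(u-\tildeu)\bigr\|_{L^2_f(\Hcal_s)},
$$
and after commuting the $L^J$ past the $\Gamma^K$ using \eqref{eq:Va-commutator} one bounds the right-hand side by $\sum_{|K'|\le N-5}\|\Gamma^{K'}(u-\tildeu)\|_{L^2_f(\Hcal_s)}$, which is controlled by $m^{-1}h$ via the $m^2\phi^2$ term in $E_m$. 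Multiplying by $m$ gives the stated bound. For the gradient estimate I would instead use the $(s/t)$-weighted version of the Sobolev inequality recorded right after Proposition~\ref{prop:sobolev}, and rewrite $\del_a=\underdel_a-(x^a/t)\del_t$ so that $\|(s/t)\del_\alpha\,\cdot\,\|_{L^2_f(\Hcal_s)}$ is controlled by $E_m^{1/2}$. The corresponding bounds for $v-\tildev$ follow identically, the extra factor $s^\delta$ being produced by the $s^{-\delta}E_1^{1/2}$ weight in the $X$-norm.

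For the final estimate on flat slices I would apply the Klainerman--Sobolev inequality \eqref{eq:Va-K-S} to $\Gamma^J(u-\tildeu)$, $\del\Gamma^J(u-\tildeu)$ and $\Gamma^J(v-\tildev)$. The required $L^2$ norms of at most three extra $\Gamma$'s applied to these quantities are controlled by $\Ecal_1(t,\Gamma^{I'}(u-\tildeu))^{1/2}$ and $\Ecal_1(t,\Gamma^{I'}(v-\tildev))^{1/2}$ for $|I'|\le |J|+4$, again after using the commutator relations \eqref{eq:Va-commutator}. The $t^{-\delta}$ weight attached to the $v$-energy inside the $X$-norm accounts for the $t^{1-\delta}$ decay on $v-\tildev$, while the unweighted $u$-energy accounts for the full $t^{-1}$ decay on $u-\tildeu$ and its derivative.

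The only real obstacle is bookkeeping of derivative counts: one must check $|K|+2\le N-5$ for the hyperboloidal Sobolev step and $|J|+1+3\le N$ for the Klainerman--Sobolev step. Both hold under the stated ranges $|K|\le N-7$ and $|J|\le N-5$ together with the hypothesis $N\ge 14$. No new analytic input beyond that used for Lemma~\ref{lem:Va-decay1} is required, precisely because every ingredient (energy functional, Sobolev inequalities, commutator formulas) is linear and therefore transfers automatically from individual elements of $X$ to their difference.
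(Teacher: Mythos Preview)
Your proposal is correct and follows precisely the approach of the paper: the paper's proof of this lemma consists of the single sentence ``The proof of Lemma~\ref{lem:Va-decay1} also applies here,'' and you have simply spelled out that argument in detail, using the hyperboloidal Sobolev inequality (Proposition~\ref{prop:sobolev}) together with commutator estimates for the first block and the Klainerman--Sobolev inequality \eqref{eq:Va-K-S} for the second, with $h$ replacing $C_1\eps$ because the $X$-norm is linear.
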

\begin{proof}
The proof of Lemma \ref{lem:Va-decay1} also applies here.
\end{proof}

\begin{lemma}
Consider the equations in \eqref{eq:Va-diff} with the same assumptions as Proposition \ref{prop:Va-contraction1}, 
then initially it holds
\be 
\Ecal_1(2, \Gamma^I \chi_1)^{1/2}
+
\Ecal_1(2, \Gamma^I \chi_2)^{1/2}
\leq
C C_1 \eps h,
\qquad
|I| \leq N, 
\qquad
\Gamma \in A.
\ee
\end{lemma}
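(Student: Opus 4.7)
My starting observation is that every element of the space $X$ carries the same fixed Cauchy data $(u_0,u_1,v_0,v_1)$ prescribed at $t_0 = 2$. Consequently, for $(u,v),(\tildeu,\tildev)\in X$, the differences satisfy
\[
(u-\tildeu)(2,\cdot) = (v-\tildev)(2,\cdot) = 0,
\qquad
\del_t(u-\tildeu)(2,\cdot) = \del_t(v-\tildev)(2,\cdot) = 0.
\]
Combined with the initial conditions in \eqref{eq:Va-diff}, this tells me that on the slice $\{t = 2\}$ the functions $\chi_1,\chi_2$, all their purely spatial derivatives $\del_x^\beta \chi_i$, and their first-order time derivatives $\del_t\chi_i$ (together with spatial derivatives thereof) vanish identically. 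Thus for any $\Gamma^I$ built out of $\del_a$, $L_a$, $\Omega_{ab}$ (i.e.\ containing no $\del_t$), the quantity $\Ecal_1(2,\Gamma^I\chi_i)$ is automatically zero.

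The only nontrivial situation is when $\Gamma^I$ involves at least two factors of $\del_t$, because $\del_t^k\chi_i|_{t=2}$ for $k \geq 2$ must be produced from the PDE. The plan is to use \eqref{eq:Va-diff} recursively, writing $\del_t^2\chi_i = \Delta\chi_i - m^2\chi_i + \mathrm{RHS}_i$ (respectively $\del_t^2\chi_2 = \Delta\chi_2 - \chi_2 + \mathrm{RHS}_2$), and then substitute into any higher-order time derivative. Because $\chi_i$ and $\del_t\chi_i$ vanish on the initial slice, every contribution surviving at $t=2$ comes from a term in $\mathrm{RHS}_i$ (or its time derivatives) in which some time-derivative of order $\geq 2$ falls on the \emph{difference} factor $u-\tildeu$, $\del_t u - \del_t\tildeu$, $v-\tildev$. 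Terms where only zero- or first-order time derivatives fall on the difference factor vanish identically at $t=2$ by the observation above.

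For the nonvanishing terms, I will split them by Hölder as (high-time-derivative of the difference)$\times$(bounded background), placing the difference factor in $L^2$ and the background factor involving $u,v,\tildeu,\tildev$ (and their derivatives up to some order $\leq N$) in $L^\infty$. The background factor is bounded by $C C_1\eps$ at $t=2$ thanks to Lemma~\ref{lem:Va-decay1} applied to $(u,v)$ and $(\tildeu,\tildev)$ (or directly a Sobolev embedding from the $X$-norm). The $L^2$ factor is of the form $\|\del_t^k \Gamma^{I'}(u-\tildeu)\|$ or $\|\del_t^k\Gamma^{I'}(v-\tildev)\|$ at $t=2$ with total order $\leq N+1$, and these are controlled by $h$ because $\Ecal_1(2,\Gamma^J(u-\tildeu))^{1/2}+\Ecal_1(2,\Gamma^J(v-\tildev))^{1/2}\lesssim 2^{\delta/2}\|(u-\tildeu,v-\tildev)\|_X \lesssim h$ for $|J|\leq N$, and higher time derivatives can again be converted to mixed derivatives of total order $\leq N+1$ via the $X$-norm together with commutator relations \eqref{eq:Va-commutator}. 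Combining the two bounds gives $\|\del_t^k\chi_i\|(2)+\|\del\del_t^k\chi_i\|(2)\lesssim C_1\eps\cdot h$ for all $k$ required, and hence $\Ecal_1(2,\Gamma^I\chi_i)^{1/2}\leq C C_1\eps h$ for every $|I|\leq N$ and $\Gamma\in A$, proving the lemma.

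The main obstacle is a bookkeeping one: making sure that in every product appearing in $\del_t^{k}\mathrm{RHS}_i|_{t=2}$, at least one difference factor survives with $\geq 2$ time derivatives on it (otherwise the product is zero at $t=2$) and that the remaining factors, after possibly using the equations for $u,\tildeu,v,\tildev$ \emph{as wave-Klein-Gordon sources produced by the map $T$}—or, more safely, after directly invoking the $X$-norm Sobolev control of their mixed derivatives on the slice $\{t=2\}$—stay within the range $|I|\leq N-5$ allowed by the pointwise bounds of Lemma~\ref{lem:Va-decay1}. Because $N\geq 14$, there is ample room for this split.
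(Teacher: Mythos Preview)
The paper states this lemma without proof, so there is nothing to compare your argument against; your proposal is a legitimate way to supply the omitted details. The core mechanism you identify is correct: since $(\chi_1,\chi_2)$, $(u-\tildeu,v-\tildev)$ and their first time derivatives all vanish on $\{t=2\}$, every nonzero contribution to $\Ecal_1(2,\Gamma^I\chi_i)$ arises by converting high powers of $\del_t$ via \eqref{eq:Va-diff} into source terms, and each surviving product necessarily carries a difference factor with at least two time derivatives (contributing $h$) multiplied by a background factor built from $u,\tildeu,v,\tildev$ (contributing $C_1\eps$).

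One point to tighten: you commit to always placing the difference factor in $L^2$ and the background in $L^\infty$, then appeal to Lemma~\ref{lem:Va-decay1}, which supplies pointwise control only up to order $N-5$. But when the difference factor carries only the minimal two time derivatives, the background can absorb up to $N-2$ derivatives, exceeding $N-5$. The fix is the usual symmetric split: in that regime put the high-derivative background factor in $L^2$ (bounded by $C_1\eps$ directly from the $X$-norm) and the low-derivative difference factor in $L^\infty$ (bounded by $h$ via the lemma immediately preceding this one). Since $N\geq 14$ guarantees that in any product of total order $\leq N$ at least one factor has order $\leq N/2\leq N-7$, this split always succeeds, and your argument then yields the stated bound $CC_1\eps h$. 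Also note that $u,\tildeu,v,\tildev$ are merely elements of $X$, not solutions of any PDE, so your parenthetical suggestion to ``use the equations for $u,\tildeu,v,\tildev$'' is not available; only the direct $X$-norm control you mention as the safer alternative is legitimate here.
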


Furthermore, by the local estimates of the solution $(\chi_1, \chi_2)$, we have the following bounds of its hyperboloidal energy on the initial slice.

\begin{lemma}
For all $|J| \leq N-5$ and $\Gamma \in A$ it holds true that
\be 
E_m (2, \Gamma^J \chi_1)^{1/2}
+
E_1 (2, \Gamma^J \chi_2)^{1/2}
\leq C C_1 \eps h.
\ee
\end{lemma}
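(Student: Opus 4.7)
The plan is to reduce the hyperboloidal energy on $\Hcal_2$ to the already-proven flat energy bound at $t=2$ by a local energy argument on the compact region bridging the two. Since the data for the difference system \eqref{eq:Va-diff} vanish at $t=2$ and the forcing inherits the spatial support $\{r \leq t-1\}$ from the common initial support in $B(0,1)$, finite propagation speed confines $\mathrm{supp}(\chi_i) \cap \Hcal_2$ to $\{|x| \leq 3/2\}$ and the corresponding spacetime region to the compact set
\[
\Rcal := \bigl\{(t,x):\, t \geq 2,\ r \leq t-1,\ t^2 - r^2 \leq 4\bigr\} \subset \{2 \leq t \leq 5/2\}.
\]

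First, I would propagate the flat energy bound from the preceding lemma to every $t \in [2, 5/2]$. Since $[\Gamma, -\Box + m^2] = 0$ for each $\Gamma \in A$ by \eqref{eq:Va-commutator}, each $\Gamma^J \chi_i$ satisfies a linear wave-Klein-Gordon equation whose source is $\Gamma^J$ applied to the right-hand sides of \eqref{eq:Va-diff}. A flat energy estimate over $[2,5/2]$, combined with the $X$-norm bounds on $(u,v)$, $(\tildeu,\tildev)$ and the $h$-bounds on their differences established in the preceding lemma, then yields
\[
\Ecal_m(t, \Gamma^J \chi_1)^{1/2} + \Ecal_1(t, \Gamma^J \chi_2)^{1/2} \leq C C_1 \eps h, \qquad t \in [2,5/2],\ |J| \leq N-5,
\]
since every product in the source contains at least one difference factor of size $h$ and at least one solution factor of size $C_1 \eps$, and the time interval is compact.

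Second, I would run the standard divergence identity for $-\Box + m^2$ (respectively $-\Box + 1$) on $\Rcal$. Finite propagation kills $\Gamma^J \chi_i$ on the lateral boundary $\{r = t-1\}$, so only the portions of $\del \Rcal$ lying in $\{t=2\}$ and $\Hcal_2$ contribute. These pieces reproduce $E_m(2, \Gamma^J\chi_1) - \Ecal_m(2, \Gamma^J\chi_1)$, while the bulk is dominated by $\int_\Rcal |\del_t \Gamma^J \chi_1|\cdot|\Gamma^J(\mathrm{RHS}_1)| \, dx\, dt$. All three contributions are $O\bigl((C_1 \eps h)^2\bigr)$ by the first step and the source estimates, whence
\[
E_m(2, \Gamma^J\chi_1)^{1/2} \leq C C_1 \eps h,
\]
and the argument for $E_1(2, \Gamma^J \chi_2)^{1/2}$ is identical with $m$ replaced by $1$.

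The main obstacle I anticipate is the careful bookkeeping of the source $L^2$ norms once $\Gamma^J$ distributes across products such as $(v-\tildev)(v^2 + v\tildev + \tildev^2)$ or $\del_\alpha\bigl((v-\tildev)(v+\tildev)\bigr)$ and their $\chi_2$-analogues. With $N \geq 14$ and the loss budget $|J| \leq N-5$, there is ample room to place the high-order derivatives on the $L^2$ factor while applying the hyperboloidal Sobolev inequality \eqref{eq:Sobolev2} or the Klainerman-Sobolev inequality \eqref{eq:Va-K-S} to the $L^\infty$ factor; because each surviving product still contains one difference piece controlled by $h$ and at least one solution piece controlled by $C_1 \eps$, every such $L^2$ norm is $\leq C C_1 \eps h$ uniformly on the compact region $\Rcal$.
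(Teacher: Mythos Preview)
Your approach is correct and is precisely what the paper means by its one-line justification ``by the local estimates of the solution $(\chi_1,\chi_2)$'': the paper gives no further details, and you have supplied the standard argument of propagating the flat energy over the compact strip $2\le t\le 5/2$ and then transferring it to $\Hcal_2$ via the divergence identity on the region between $\{t=2\}$ and $\Hcal_2\cap\Kcal$. There is nothing to add.
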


Next we look at the energy estimates of $(\chi_1, \chi_2)$ on the flat slices.

\begin{lemma}[Estimates on the flat slices]
\label{lem:EEflat}
With the same assumptions and notations as in Proposition \ref{prop:Va-contraction1}, for all $\Gamma \in A$ we have
\be 
\aligned
\Ecal_1 \big(t, \Gamma^I \chi_1\big)^{1/2}
&\leq C C_1 \eps h,
\qquad
|I| \leq N,
\\
\Ecal_1 \big(t, \Gamma^I \chi_2\big)^{1/2}
&\leq C C_1 \eps h t^\delta,
\qquad
|I| \leq N.
\endaligned
\ee
\end{lemma}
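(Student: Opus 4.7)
The plan is to mirror the proof of Proposition~\ref{prop:Va-EEflat} almost line by line, with $(u,v)$ replaced by the differences. The whole argument hinges on a structural observation: on the right-hand side of \eqref{eq:Va-diff} every term has already been factored so that exactly one factor is a difference ($u-\tildeu$, $v-\tildev$, or $\del_t u - \del_t \tildeu$). Putting that factor into the difference estimates of the preceding lemma (which scale with $h$) and the remaining factors into the $X$-norm bounds for $(u,v)$ and $(\tildeu,\tildev)$ (which scale with $C_1 \eps$), the exact bookkeeping from the proof of Proposition~\ref{prop:Va-EEflat} goes through with one extra factor of $h/(C_1\eps)$ throughout.

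I would start with $\chi_2$, which is the easier of the two because the Klein-Gordon operator carries mass $1$. The standard flat-slice energy estimate yields
\[
\Ecal_1(t, \Gamma^I \chi_2)^{1/2} \le \Ecal_1(2, \Gamma^I \chi_2)^{1/2} + \int_2^t \bigl\| \Gamma^I (\mathrm{RHS}_2) \bigr\|(t')\,dt',
\]
and each product is handled by distributing derivatives via Leibniz, placing the lower-order factor in $L^\infty$ through the Klainerman--Sobolev-type bounds of Lemma~\ref{lem:Va-decay1} and its difference analogue, and the higher-order factor in $L^2$. The worst integrand comes from $N_3(u-\tildeu)\tildev$ and is of order $C C_1 \eps h\cdot t'^{-1+\delta}$; integration produces the claimed $C C_1 \eps h\, t^\delta$.

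For $\chi_1$ the flat-slice energy identity only controls the derivatives $\|\del \Gamma^I \chi_1\|$, since the mass term $m^2 \|\Gamma^I \chi_1\|^2$ degenerates as $m \to 0$. The derivative bound itself is routine: every term on the right-hand side of the $\chi_1$ equation contains at least one factor of $v$ or $\tildev$ on top of the difference, so the decay from Lemma~\ref{lem:Va-decay1} makes the time integrand integrable (of order $C C_1 \eps h\cdot t'^{-3/2+2\delta}$), giving $\sum_\alpha \|\del_\alpha \Gamma^I \chi_1\| \le C C_1 \eps h$. To control $\|\Gamma^I \chi_1\|$ itself uniformly in $m$, I would then mimic the splitting used for $\phi$ and write $\chi_1 = \del_\alpha \Xi^\alpha + \Xi^5$, where $\Xi^\alpha$ solves the Klein-Gordon equation (mass $m$) with source $P^\alpha (v-\tildev)(v+\tildev)$ and zero data, and $\Xi^5$ solves the Klein-Gordon equation (mass $m$) with source $M_1(v-\tildev)(v^2+v\tildev+\tildev^2)$ and zero data. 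The first piece is handled by an energy estimate applied to $\Xi^\alpha$; the second by Proposition~\ref{prop:Va-key}, after using the support property $|x|<t$ to convert $\|xf\|_{L^2}$ into $t\|f\|_{L^2}$ and checking that the resulting exponent $q$ is strictly negative.

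The main obstacle is, exactly as in Proposition~\ref{prop:Va-EEflat}, the uniform-in-$m$ $L^2$ bound on $\chi_1$: the naive energy estimate yields only $m\|\Gamma^I\chi_1\|$, which is useless as $m\to 0$, and the divergence/cubic splitting combined with Proposition~\ref{prop:Va-key} is the device that circumvents this degeneracy. Carrying out this splitting with the difference sources and verifying that the pointwise decay of $(v-\tildev)$ from the preceding lemma is fast enough for Proposition~\ref{prop:Va-key} to apply is the only nontrivial adjustment needed beyond the original argument; combining it with the derivative bound above gives the desired $\Ecal_1(t,\Gamma^I\chi_1)^{1/2} \le CC_1 \eps h$.
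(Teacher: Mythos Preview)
Your proposal is correct and follows exactly the approach the paper indicates: the paper omits the proof entirely, saying only that ``a very similar argument to the one in the proof of Proposition~\ref{prop:Va-EEflat} also applies here,'' and you have spelled out precisely that adaptation, including the crucial divergence/cubic splitting $\chi_1 = \del_\alpha \Xi^\alpha + \Xi^5$ together with Proposition~\ref{prop:Va-key} for the uniform-in-$m$ $L^2$ bound on $\chi_1$.
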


We omit the proof since a very similar argument to the one in the proof of Proposition \ref{prop:Va-EEflat} also applies here. 
Now we turn to estimate the energies of $(\chi_1, \chi_2)$ on the hyperboloidal slices.

\begin{lemma}[Estimates on the hyperboloids]
\label{lem:EEhyper}
For all $|J| \leq N-5$ and $\Gamma \in A$, we have
\be 
\aligned
E_m \big(t, \Gamma^J \chi_1 \big)^{1/2}
&\leq C C_1 \eps h,
\\
E_1 \big(t, \Gamma^J \chi_2 \big)^{1/2}
&\leq C C_1 \eps h s^\delta.
\endaligned
\ee
\end{lemma}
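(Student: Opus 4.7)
The plan is to apply the hyperboloidal energy estimate \eqref{eq:w-EE} to $\Gamma^J \chi_1$ and $\Gamma^J \chi_2$ for $|J| \leq N-5$, using that $\Gamma \in A$ commutes with $-\Box + m^2$, and to mimic line-by-line the source-term analysis of Proposition \ref{prop:Va-EEhyper}, picking up an extra factor $h$ every time a difference $(u-\tildeu)$, $\del(u-\tildeu)$, or $(v-\tildev)$ is placed either in $L^\infty$ or in $L^2_f(\Hcal_{s'})$. The initial hyperboloidal energies on $\Hcal_2$ are already bounded by $CC_1\eps h$ by the preceding lemma, so everything reduces to integrating the $L^2_f(\Hcal_{s'})$ norm of the differentiated nonlinearities in $\eqref{eq:Va-diff}$.

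For $\chi_1$, I would Leibniz-expand $\Gamma^J$ of the right-hand side of the $\chi_1$-equation. Each resulting term is a product of three factors (or a divergence of a product of two), exactly one of which is a difference carrying $h$. By the previous lemma the difference factor, or the non-difference factor of lower order, has the same pointwise decay as in Lemma \ref{lem:Va-decay1}, namely $\lesssim CC_1\eps \hskip0.03cm t^{-3/2+\delta}$ up to $h$, while the factor of highest order sits in $L^2_f(\Hcal_{s'})$ and is controlled by the $X$-norms of $(u,v)$ or $(\tildeu,\tildev)$. The divergence-form contribution $P^\alpha \del_\alpha\big((v-\tildev)(v+\tildev)\big)$ is treated by distributing the derivative and by using the weighted split $\del_\alpha = (t/s')\cdot (s'/t)\del_\alpha$ to keep the $(s'/t)$ weight on the $L^2_f$ factor, as in Proposition \ref{prop:Va-EEhyper}. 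The resulting integrand is $\lesssim CC_1\eps h\, s'^{-3/2+3\delta}$, integrable in $s'$, which gives $E_m(s,\Gamma^J \chi_1)^{1/2} \leq CC_1\eps h$.

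For $\chi_2$ the four source contributions — the $(\del_t u - \del_t \tildeu)(\del_t u + \del_t \tildeu)$ quadratic, the cubic $(u-\tildeu)(u^2+u\tildeu+\tildeu^2)$, and the two $uv$-type terms $(v-\tildev)u$ and $(u-\tildeu)\tildev$ — are treated precisely as their analogues in Proposition \ref{prop:Va-EEhyper}, each acquiring a single $h$ from the difference factor. In particular the cubic is handled via the weights $r^{-1}$ and $r$ and the Hardy inequality on hyperboloids, while the quadratic terms always place whichever of $v - \tildev$ or $u - \tildeu$ is available in $L^2_f$ and its complement in $L^\infty$. The integrand is then $\lesssim CC_1\eps h\, s'^{-1+\delta}$, and integration in $s'$ produces the factor $s^\delta$, yielding $E_1(s,\Gamma^J \chi_2)^{1/2} \leq CC_1 \eps h\, s^\delta$. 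The only point requiring care is the regularity bookkeeping: with $|J| \leq N-5$, every Leibniz split has at most $N-5$ derivatives on some factor (so that the pointwise estimates from Lemma \ref{lem:Va-decay1} and its difference analogue apply) and at most $N$ derivatives on the complementary $L^2_f$ factor (so that the $X$-norm control applies). This is the main, but mild, obstacle; once it is verified the estimate closes exactly as in Proposition \ref{prop:Va-EEhyper}.
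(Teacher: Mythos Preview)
Your approach is exactly what the paper intends --- it explicitly says the proof is ``very similar to the one of Proposition \ref{prop:Va-EEhyper}'' and omits it --- and your outline of the $\chi_1$ argument and of most of the $\chi_2$ argument is correct.

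There is one slip in your treatment of the term $(u-\tildeu)\tildev$. You propose to place the difference factor $u-\tildeu$ in $L^2_f(\Hcal_{s'})$, but the hyperboloidal part of the $X$-norm only controls $E_m(s',\Gamma^{J}(u-\tildeu))^{1/2}$, hence $m\|\Gamma^{J}(u-\tildeu)\|_{L^2_f}$ and the weighted derivatives $(s'/t)\del$, $\underdel_a$ --- not $\|\Gamma^{J}(u-\tildeu)\|_{L^2_f}$ itself uniformly in $m\in[0,1]$. Routing through Hardy to get $\|r^{-1}\Gamma^{J}(u-\tildeu)\|_{L^2_f}\lesssim h$ then forces $r\,\Gamma^{J'}\tildev$ into $L^\infty$, which on $\Hcal_{s'}$ decays only like $s'^{-1/2+\delta}$ and is not integrable to $s^\delta$. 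The fix is simply to follow Proposition \ref{prop:Va-EEhyper}'s rule verbatim: always put the $v$-type factor (here $\tildev$) in $L^2_f$, using $\|\Gamma^{J_1}\tildev\|_{L^2_f(\Hcal_{s'})}\leq E_1(s',\Gamma^{J_1}\tildev)^{1/2}\lesssim C_1\eps\, s'^{\delta}$, and put the $u$-type factor (here $u-\tildeu$) in $L^\infty$, using $t\,|\Gamma^{J_2}(u-\tildeu)|\lesssim h$ for $|J_2|\leq N-5$ from the difference analogue of Lemma \ref{lem:Va-decay1}. This yields the integrand $\lesssim C_1\eps\, h\, s'^{-1+\delta}$ and the argument closes exactly as you describe.
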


The proof is very similar to the one of Proposition \ref{prop:Va-EEhyper}, and we omit it.

We are in a position to prove Proposition \ref{prop:Va-contraction1}, which further proves Theorem \ref{thm:Va-main}.

\begin{proof}[Proof of Proposition \ref{prop:Va-contraction1}]
By recalling the choice of the constants $C_1, \eps$, and the estimates in Lemma \ref{lem:EEflat} and Lemma \ref{lem:EEhyper}, it is not hard to show
$$
\| (\chi_1, \chi_2) \|_X
\leq {1\over 2} h,
$$ 
which proves Proposition \ref{prop:Va-contraction1}.
\end{proof}

\begin{proof}[Proof of Theorem \ref{thm:Va-main}]
Let $(\lambda_1^{(1)}, \lambda_2^{(1)})$ be the solution to the equations
$$
\aligned
- \Box \lambda_1^{(1)} + m^2 \lambda_1^{(1)}
&= 0,
\\
- \Box \lambda_2^{(1)} + \lambda_2^{(1)}
&= 0,
\\
\big( \lambda_1^{(1)}, \lambda_2^{(1)} \big)(t_0, \cdot)
= (u_0, & v_0),
\qquad
\big( \del_t \lambda_1^{(1)}, \del_t \lambda_2^{(1)} \big)(t_0, \cdot)
= (u_1, v_1),
\endaligned
$$
which can be proved to be an element in space X.

Next we define the sequence
$$
(\lambda_1^{(n)}, \lambda_2^{(n)})
:=
T(\lambda_1^{(n-1)}, \lambda_2^{(n-1)}),
$$
for $n \geq 2$.
Then Proposition \ref{prop:Va-contraction1} tells us that 
$$
\big(\lambda_1^{(n)}, \lambda_2^{(n)} \big)
\xrightarrow{X}
(u, v),
$$
which is the only solution to \eqref{eq:Va-model}.

Finally, we observe that 
$$
|u(t, x)| \lesssim \min \{t^{-1}, m^{-1} t^{-3/2} \}
$$
is equivalent to \eqref{eq:Va-unified}, and hence the proof is complete.
\end{proof}


\subsection{High order estimates on the hyperboloids}

Looking back at the definition of the $\| \cdot \|_X$-norm, there is a lack of derivatives for the energy estimates of the solution $(u, v)$ on the hyperboloids. We will make up that lack in this subsection, which is stated now.

\begin{proposition}\label{prop:Va-high}
Let $(u, v)$ be the solution to the system \eqref{eq:Va-model}, and let all the assumptions in Theorem \ref{thm:Va-main} be true, then we have
\be 
\aligned
s^{- 2 \delta} E_m (s, \Gamma^I u)^{1/2}
+
s^{-1/2 - \delta} E_1(s, \Gamma^I v)^{1/2}
\leq 
C_1 \eps,
\qquad
|I| \leq N,
\qquad
\Gamma \in A.
\endaligned
\ee
\end{proposition}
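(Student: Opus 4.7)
\medskip

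\noindent\textbf{Plan for the proof of Proposition \ref{prop:Va-high}.}
My plan is a bootstrap argument that extends the hyperboloidal energy bounds built into the $X$-norm (which only go up to order $|J|\le N-5$) to the full range $|I|\le N$, at the price of admitting the mild polynomial-in-$s$ growth appearing in the statement. Concretely, I would assume on an interval $s\in[2,s^\ast)$ the bootstrap bounds
\[
E_m(s,\Gamma^I u)^{1/2}\le 2C_1\eps\, s^{2\delta},\qquad E_1(s,\Gamma^I v)^{1/2}\le 2C_1\eps\, s^{1/2+\delta}
\]
for all $|I|\le N$ and $\Gamma\in A$, and then improve these by a factor $1/2$ to extend the interval to all $s\ge 2$. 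The initial values on $\Hcal_2$ are controlled by a standard local-existence argument analogous to the Lemma preceding Proposition \ref{prop:Va-EEhyper}, which gives $E_m(2,\Gamma^I u)^{1/2}+E_1(2,\Gamma^I v)^{1/2}\lesssim\eps$.

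\medskip

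\noindent The core of the argument is to differentiate \eqref{eq:Va-model} by $\Gamma^I$, use the commutation relations \eqref{eq:Va-commutator} to move $\Gamma^I$ inside $-\Box+m^2$ and $-\Box+1$, and then apply the hyperboloidal energy inequality \eqref{eq:w-EE}. The task reduces to estimating $\int_2^s\|\Gamma^I(\mathrm{RHS})\|_{L^2_f(\Hcal_{s'})}\,ds'$. For each quadratic or cubic product I would decompose $\Gamma^I(PQ)=\sum_{I_1+I_2=I}\Gamma^{I_1}P\,\Gamma^{I_2}Q$ and use that $\min(|I_1|,|I_2|)\le N/2\le N-7$ since $N\ge 14$; the low-order factor is then controlled in $L^\infty(\Hcal_s)$ by Lemma \ref{lem:Va-decay1}, while the high-order factor is placed in $L^2_f(\Hcal_s)$ via either the $X$-norm (when its order is $\le N-5$) or the bootstrap (otherwise). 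A key technical ingredient for every product containing $u$ (such as $u^3$ or $uv$) is the hyperboloidal Hardy inequality, which gives $\|r^{-1}\Gamma^{I_1}u\|_{L^2_f(\Hcal_s)}\lesssim E_m(s,\Gamma^{I_1}u)^{1/2}$; without this substitution, the only available $L^2_f$ bound on $\Gamma^{I_1}u$ would come from the $m^2\phi^2$ term in $E_m$, producing a useless factor $m^{-1}$ as $m\to 0$.

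\medskip

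\noindent Applying this scheme term by term: for $v^3$ and $\del_\alpha(v^2)$ on the right of the $u$-equation, the sup-norm decay $|\Gamma^{K}v|\lesssim C_1\eps\,t^{-3/2}s^\delta$ and $|\del\Gamma^{K}v|\lesssim C_1\eps\,t^{-1/2}s^{-1+\delta}$ for $|K|\le N-7$ yield integrand bounds of order $(C_1\eps)^3 s^{-5/2+O(\delta)}$ and $(C_1\eps)^2 s^{-1+O(\delta)}$ respectively, and the $s$-integration produces at most $s^{O(\delta)}$, closing the $u$-bootstrap. For the $v$-equation, $(\del_t u)^2$ and $u^3$ are handled using the Lemma \ref{lem:Va-decay1} bounds $|\del\Gamma^K u|\lesssim C_1\eps\, t^{-1/2}s^{-1}$ and $|\Gamma^K u|\lesssim C_1\eps\, t^{-1}$ for $|K|\le N-5$, combined with Hardy for the top-order $u$-factor, producing $s^{O(\delta)}$ after integration. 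The borderline term is $N_3\,uv$: with the high-order derivatives on $u$ one writes
\[
\|\Gamma^{I_1}u\,\Gamma^{I_2}v\|_{L^2_f(\Hcal_s)}\le \|r^{-1}\Gamma^{I_1}u\|_{L^2_f(\Hcal_s)}\,\|r\,\Gamma^{I_2}v\|_{L^\infty(\Hcal_s)},
\]
using Hardy and the sup-norm $|r\,\Gamma^{I_2}v|\lesssim C_1\eps\, t^{-1/2}s^\delta\lesssim C_1\eps\, s^{-1/2+\delta}$ on $\Hcal_s$; with the high-order derivatives on $v$ one uses the flat-slice sup-norm $|\Gamma^{I_1}u|\lesssim C_1\eps\, t^{-1}\le C_1\eps\,s^{-1}$ from the $X$-norm and the bootstrap $\|\Gamma^{I_2}v\|_{L^2_f}\le E_1^{1/2}\le 2C_1\eps\, s^{1/2+\delta}$. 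Either way the integrand is of size $(C_1\eps)^2 s^{-1/2+O(\delta)}$, whose $s$-integral reproduces the bootstrap rate $s^{1/2+\delta}$. Choosing $\eps$ small enough relative to $C_1$ (as in the proof of Proposition \ref{prop:Va-contraction0}) absorbs the $(C_1\eps)^2$ factors and gives the required $1/2$ improvement.

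\medskip

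\noindent The main obstacle I anticipate is precisely the $uv$ coupling in the $v$-equation: it is the only term whose integrated contribution genuinely saturates a positive power of $s$, and it is what forces the $s^{1/2+\delta}$ growth rate in the bootstrap. Closing this term cleanly requires the Hardy inequality (to avoid any $m^{-1}$ factor on the top-order $u$) together with the sharp pointwise bound $|r\,\Gamma^{I_2}v|\lesssim s^{-1/2+\delta}$ that comes from combining the cone-support condition $r\le t$ with the hyperboloidal sup-norm on $v$; minor numerical slack in the $\delta$-power is absorbed by the freedom in choosing $\delta\ll 1/10$ as in the earlier sections.
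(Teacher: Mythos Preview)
Your proposal is correct and follows exactly the route the paper indicates: the paper omits the proof entirely, saying only that Proposition \ref{prop:Va-high} follows from ``the standard bootstrap method'' using the estimates already established for $(u,v)\in X$, possibly after enlarging $C_1$ or shrinking $\eps$. Your detailed bootstrap---placing low-order factors in $L^\infty$ via Lemma \ref{lem:Va-decay1}, high-order factors in $L^2_f$ via the bootstrap hypotheses, and using the hyperboloidal Hardy inequality to avoid $m^{-1}$ losses on the top-order $u$---is precisely the standard argument the paper has in mind, and your identification of $N_3\,uv$ as the term that forces the $s^{1/2+\delta}$ growth is accurate.
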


Note that we already know the solution $(u, v)$ to the system \eqref{eq:Va-model} exists globally, with moreover $\| (u, v) \|_X \leq C_1 \eps$. 
Many of the estimates for $(u, v)$ established in the analysis are ready to use. It is not difficult to show Proposition \ref{prop:Va-high} relying on the standard bootstrap method, where we might harmlessly enlarge $C_1$ or shrink $\eps$, so we omit its proof.


\section*{Acknowledgements}

The author would like to express his sincere thanks to Philippe G. LeFloch (Sorbonne University) for proposing this interesting problem to him, and for many helpful discussions. The author also owes many thanks to Siyuan Ma (Sorbonne University) for his encouragements. The author was supported by the Innovative Training Networks (ITN) grant 642768, entitled ModCompShock.



{\footnotesize

}

\end{document}